\theoremstyle{plain}
\newtheorem{theorem}{Theorem}[section]
\newtheorem{proposition}[theorem]{Proposition}
\newtheorem{lemma}[theorem]{Lemma}
\newtheorem{corollary}[theorem]{Corollary}
\theoremstyle{definition}
\newtheorem{definition}[theorem]{Definition}
\theoremstyle{remark}
\newtheorem{remark}[theorem]{Remark}
\newtheorem{example}[theorem]{Example}
\newcommand{\CC}{\mathbb{C}}
\newcommand{\PP}{\mathbb{P}}
\newcommand{\QQ}{\mathbb{Q}}
\newcommand{\OO}{\mathcal{O}}
\newcommand{\ZZ}{\mathbb{Z}}
\newcommand{\RR}{\mathbb{R}}
\newcommand{\XX}{\mathcal{X}}
\newcommand{\TXX}{\tilde{\mathcal{X}}}
\newcommand{\YY}{\mathcal{Y}}
\newcommand{\FQQ}{\mathcal{Q}}
\newcommand{\TYY}{\tilde{\mathcal{Y}}}
\newcommand{\conorm}{\overline{X_0}}
\newcommand{\EE}{\mathcal{E}}
\newcommand{\FF}{\mathcal{F}}
\newcommand{\MM}{\mathcal{M}}
\newcommand{\Sec}{\operatorname{Sec}}
\DeclareMathOperator{\Pic}{\mathrm{Pic}}
\DeclareMathOperator{\Sym}{\mathrm{Sym}}
\DeclareMathOperator{\Aut}{\mathrm{Aut}}
\begin{document}

\title
  [Degeneration of Hodge structures of cubic hypersurfaces]         
  {Degenerations to secant cubic hypersurface and limiting Hodge structures} 

\author{Renjie Lyu}
\email{r.lyu@xmu.edu.cn}  
\address{School of Mathematical Sciences\\Xiamen University\\Fujian\\361005\\China}
\author{Zhiwei Zheng}
\email{zhengzhiwei@mail.tsinghua.edu.cn}
\address{Yau Mathematical Sciences Center\\Tsihuang University\\Beijing\\100084\\China}
%

\thanks{This work was supported by the national key research and development program of China (No. 2022YFA1007100). R.~Lyu is partially supported by the NSFC grant 12571046, Z.~Zheng is partially supported by the NSFC grant 12201337 and Tsinghua University Dushi Program.
}

\begin{abstract}
The secant variety of the Veronese surface is a singular cubic fourfold. Degenerations to this specific cubic fourfold and the associated limiting Hodge structures are key ingredients for Hassett and Laza in studying the moduli space of cubic fourfolds and the period mapping. We generalize some results to the cubic hypersurfaces of secant type. Specifically, we compute the limit mixed Hodge structure for families of smooth cubic hypersurfaces degenerating to the cubic hypersurface of secant type. Using Usui's partial compactification and the resulting limit mixed Hodge structure, we characterize a local extension of the period map associated with the degenerating family.
\end{abstract}

\maketitle

\vspace*{6pt}\tableofcontents

\section{Introduction}

The \emph{variation of Hodge structures} is a classical transcendental method 
for studying deformations of projective complex manifolds. 
A variation of Hodge structures gives rise to an important holomorphic map, 
the so-called \emph{period map}, from the parameter space to 
the classifying space of Hodge structures. To characterize the behavior of 
a period map as it approaches the boundary, Griffiths~\cite{Griff-period-II} 
proposed investigating how a variation of Hodge structures degenerates. 
Schmid~\cite{Schmid-VHS-73} developed nilpotent orbit theory to describe 
the asymptotic behavior of a family of Hodge structures in one variable 
and showed that the degeneration yields a \emph{limit mixed Hodge structure}. 
A similar theory was later established by Steenbrink~\cite{Steenbrink-LMHS}.

When the variation of Hodge structure comes from 
a one-parameter degeneration of projective complex manifolds, 
the limit mixed Hodge structure is closely related to the singular fiber 
of the degeneration. This relationship is frequently used to study the compactification of
moduli spaces of algebraic varieties.

In the study of the moduli space of cubic fourfolds, 
R.~Laza~\cite{Laza-cubic-period-10} showed the limit mixed Hodge structure 
is crucial for analyzing the period map on this moduli space. He considered 
a particular semi-stable cubic fourfold \(X_0\), namely
the secant variety of the Veronese surface \(S\subset \mathbb{P}^5\). 
B.~Hassett~\cite{Hassett-cubic-00} proved that 
the limit mixed Hodge structure associated with a one-parameter family of
smooth cubic fourfolds degenerating to \(X_0\) is 
a pure Hodge structure of discriminant two. This result reflects the fact that 
the limit period points of such degenerations lie in the arrangement of 
hyperplanes of discriminant two in the period domain. Using this fact, 
R.~Laza established an isomorphism between the GIT compactification of 
the moduli space of cubic fourfolds and Looijenga's compactification 
associated with that hyperplane arrangement.

The Veronese surface is an example of a \emph{Severi variety}. A Severi variety is
a nondegenerate nonsingular closed subvariety $S\subset \PP^{m+1}$ of 
dimension \(d\) such that \(d =\frac{2(m-1)}{3}\) and the closure of the union of 
projective lines secant to \(S\) does not sweep out all of \(\PP^{m+1}\).
F.~Zak~\cite[Thm. 4.7]{Zak93} completely classified the Severi varieties.
Up to projective equivalence, there are exactly four: 
\begin{enumerate}
    \item $d=2$, $\PP^2\hookrightarrow \PP^5$, the Veronese surface;
    \item $d=4$, $\PP^2\times \PP^2\hookrightarrow \PP^8$, the Segre fourfold;
    \item $d=8$, $\mathrm{Gr}(2,6)\hookrightarrow \PP^{14}$, 
    the Pl\"ucker embedding of the Grassmannian of lines in $\PP^5$;
    \item $d=16$, $E_{6}\hookrightarrow \PP^{26}$, the Cartan variety (also known as the Cayley plane $\mathbb{O}\PP^2$), 
    given by a minimal irreducible representation of the algebraic group $E_6$.
\end{enumerate}
As in the case of the Veronese surface, the secant variety of any Severi variety 
is always a cubic hypersurface in $\PP^{m+1}$, singular along $S$; 
see \cite[Thm. 2.4]{Zak93}. The purpose of this paper is to study degenerations of 
Hodge structures arising from families of cubic hypersurfaces that degenerate to 
the secant varieties of higher-dimensional Severi varieties listed above.

Let \(\mathcal{X}\to \Delta\) be a one-parameter degeneration of 
cubic \(m\)-folds whose central fiber is the secant variety \(\Sec(S)\), and 
suppose the deformation direction is given by a generic smooth cubic \(X_{\infty}\); 
see~\eqref{eqs:one-parameter-degen}. Following the method of B.~Hassett~\cite[\S 4.4]{Hassett-cubic-00}, 
we construct a semistable degeneration of \(\mathcal{X}\to \Delta\),
and apply \emph{Clemens-Schmid exact sequence} to compute the associated 
limit mixed Hodge structure \(\mathrm{H}^m_{\lim}\).

\begin{theorem}(cf. Theorem~\ref{lim-mix-HS})
\label{main-thm-1}
Let $S\subset \PP^{m+1}$ be a Severi variety with $\dim S>2$, and let
\(N\colon \mathrm{H}^m_{\lim}\to \mathrm{H}^m_{\lim}\) be 
the nilpotent monodromy associated with the one-parameter degeneration
\(\mathcal{X}\to \Delta\). Then the index of nilpotence of \(N\) is \(2\), 
and the monodromy weight filtration is
\begin{equation}
\label{deg-wt-lim}
0\subset W_{m-1}\subset W_m\subset W_{m+1}=\mathrm{H}^m_{\lim}.
\end{equation}
The canonical polarized Hodge structure on  
$\mathrm{Gr}^W_m\mathrm{H}^m_{\lim}\coloneqq W_m/W_{m-1}$ is isomorphic to 
the polarized Hodge structure on \(\mathrm{H}^{d-1}(V, \mathbb{Q})\),
where \(V\coloneqq S\cap X_{\infty}\) is the hypersurface in \(S\).
Moreover, $\mathrm{Gr}^W_{m-1}\mathrm{H}^m_{\lim}$ 
(resp. \(\mathrm{Gr}^W_{m+1}\mathrm{H}^m_{\lim}\)) is a Hodge-Tate twist of 
weight $\frac{1-m}{2}$ (resp. $\frac{1+m}{2}$).
\end{theorem}

In contrast to the case of the Veronese surface and cubic fourfolds, 
for \(\dim S>2\), the nilpotent monodromy operator has a significantly 
larger index of nilpotence, so that the limit mixed Hodge structure 
is not pure. Nevertheless, the limit mixed Hodge structures share 
analogous features, all characterized by the hypersurface \(V=S\cap X_{\infty}\) 
determined by the given degeneration: 
\begin{itemize}
    \item for \(d=2\), the pure Hodge structure \(\mathrm{H}^4_{\lim}\) corresponds to 
the Hodge structure of K3 surface \(Y\) of degree two obtained as 
the double cover of \(S\cong \mathbb{P}^2\) branched along \(V\) (a sextic curve);
\item for \(d>2\), the main graded piece \(\mathrm{Gr}^W_m \mathrm{H}^m_{\lim}\) 
is isomorphic to \(\mathrm{H}^{d-1}(V)\) as polarized Hodge structures.
\end{itemize}

In Section~\ref{sec:ext-period-map}, we interpret limit mixed Hodge structures 
in terms of the extension of period maps. A one-parameter degeneration of 
smooth projective varieties naturally corresponds to a period map defined on 
the punctured disk. In general, Griffiths conjectured that for any local period map
\[
    \wp\colon \Delta^*\to \Gamma\backslash \mathcal{D},
\]
there exists an analytic partial compactification \(\overline{\Gamma\backslash \mathcal{D}}\)
such that \(\wp\) extends across the origin to a holomorphic map 
\[
\overline{\wp}\colon \Delta\to \overline{\Gamma\backslash \mathcal{D}}.
\]
For classical bounded symmetric domains, the Baily-Borel compactification~\cite{BBC-66, Borel-ext-72} 
serves this purpose. For general period domains, 
there is a substantial body of literature~\cite{CK-ext-wt2-77,KU-BS02,KU-Annals-09,KPR-19} 
considering compactifications using mixed Hodge structures or nilpotent orbit cones. 

We find that a specific partial compactification developed by 
S.~Usui~\cite{Usui-compactify-95} is suitable for studying
the extension problem of the period map in our setting. 
The key point is that the properties of the nilpotent monodromy operator and 
the induced weight filtration obtained in Theorem~\ref{main-thm-1} 
satisfy the conditions (see Definition~\ref{def:rat-bound-comp}) required
to construct the \emph{rational boundary components} of the compactification.
Moreover, the limit period point under the extension is characterized by the
associated limit mixed Hodge structure.

Through the extension of period maps, we can explain the modular meaning of
the results of Theorem~\ref{main-thm-1} as follows.
Consider the GIT compactification $\overline{\FF}$ of 
the moduli space of smooth cubic hypersurfaces in $\PP^{m+1}$.
Let \(\MM\) be the exceptional divisor in the Kirwan blow-up 
$\overline{\FF}^{\textrm{Kir}}$ of \(\overline{\FF}\) at the point 
corresponding to the secant cubic $\Sec(S)$. 
A generic point in $\MM$ corresponds to a one-parameter family of 
smooth cubic \(m\)-folds degenerating to \(\Sec(S)\). 
Extending the period map on the punctured disk leads to the following statement. 

\begin{theorem}(cf. Theorem~\ref{thm:glob-ext-thm})
\label{glo-ext}
The global rational period map 
\[
\mathcal{P}\colon \overline{\FF}^{\textrm{Kir}}\dashrightarrow \overline{\Gamma\backslash \mathcal{D}}
\]
can be defined generically over the exceptional divisor $\MM$.
Moreover, the extended map on $\MM$ is the period map for 
smooth divisors in the linear system \(\lvert \mathcal{O}_S(3)\rvert\). 
\end{theorem}

\section{Degeneration of Hodge structures}
\label{sec:deg-HS}
In this section, we review the notion of limit mixed Hodge structure,
and the Clemens-Schmid exact sequence for a semistable degeneration.
All the contents and results in this section can be found in 
Schmid's paper~\cite{Schmid-VHS-73} and Morrison's expository note~\cite{Morrison84}.

\subsection{Limiting mixed Hodge structures} 
\label{subsec:limit-mhs}
    Let $\pi: \mathfrak{X}^*\rightarrow \Delta^*$ be a smooth projective 
    family of complex varieties over the punctured disk 
    $\Delta^*$. Let \(\mathfrak{h}\) be the upper half plane with 
    the universal covering map
    \[
        e: \mathfrak{h}\rightarrow \Delta^*,~ e(z)=e^{2\pi iz}.
    \] 
    Consider the pullback diagram
    \[
    \begin{tikzcd}
        \mathfrak{X}^*\times_{\Delta^*} \mathfrak{h}\ar[r] \ar[d] &  \mathfrak{X}^* \ar[d]\\
        \mathfrak{h}\ar[r] & \Delta^*.
    \end{tikzcd}
    \]
    Since \(\mathfrak{h}\) is simply connected, the pullback
    \(\mathfrak{X}^*\times_{\Delta^*} \mathfrak{h}\) is isomorphic to
    a trivial fibre bundle \(X_\infty\times \mathfrak{h}\) where
    \(X_\infty\) is diffeomorphic to any fibre of \(\pi\).
    Fix an integer $m\geq 0$. Denote by \(\mathrm{H}\) the cohomology group 
    \(\mathrm{H}^m(X_\infty, \QQ)\) with the canonical non-degenerate bilinear form. 
    Let $\mathcal{D}$ be the classifying space of polarized Hodge structures 
    on $\mathrm{H}$. There is a holomorphic map 
    \[
        \psi: \mathfrak{h}\rightarrow \mathcal{D}
    \]
    which assigns each \(z\in \mathfrak{h}\) to the polarized Hodge structure
    given by the fiber \(X_{e(z)}\). Let
    \begin{equation}
    \label{eqs:monodromy-rep}
        \rho: \pi_1(\Delta^*)\rightarrow \Aut(\mathrm{H})
    \end{equation}
    be the monodromy representation associated with the family \(\pi\). 
    The map \(\psi\) corresponds to the so-called 
    \emph{Griffiths period map}
    \[
    \phi: \Delta^*\rightarrow \Gamma\backslash \mathcal{D}
    \]
    where $\Gamma$ is the \emph{monodromy group} and 
    \(\Gamma\backslash \mathcal{D}\) is the quotient space of 
    \(D\) by the action of \(\Gamma\).
    We denote by \(T\) the \emph{monodromy transformation}
    of the family \(\pi: \mathfrak{X}^*\to \Delta^*\), i.e.,
    the generator of the image of~\eqref{eqs:monodromy-rep}.
    The following is the well-known monodromy theorem.
    \begin{theorem}~\cite[Thm. 6.1]{Schmid-VHS-73}
        \label{thm:monodromy-thm}
        The monodromy transformation \(T\) is quasi-unipotent, 
        and the index of nilpotence is at most $m+1$, i.e., there exists an 
        integer $k$ such that $(T^k-1)^{m+1}=0$. 
    \end{theorem}
    Write \(T=T_sT_u\) where \(T_u\) (resp. \(T_s\)) is 
    the unipotent (resp. semisimple) part of \(T\). 
    The \emph{nilpotent operator} $N$ is defined as follows 
    \begin{equation}
        \label{eqs:nil-map-log}
        N:=\log(T_u)=-\sum_{i=1}^m \frac{(I-T_u)^i}{i}.
    \end{equation}
    Since the unipotent index of $T$ is finite, \(\log(T_u)\) is well-defined. 
    In particular, the unipotent index of 
    $T$ is equal to the nilpotent index of $N$. 
    
    We may assume that \(T\) is unipotent. The holomorphic map $\psi$ satisfies the relation 
    $\psi(z+1)=T\psi(z)$. Consider the \(N\)-twisting map
    \[
    \tilde{\psi}(z):=\operatorname{exp}(-zN)\psi(z).
    \]
    Here \(\tilde{\psi}(z)\) is a filtration in the compact dual 
    $\check{\mathcal{D}}$ of $\mathcal{D}$. It is easy to see 
    \[
    \tilde{\psi}(z+1)=\operatorname{exp}(-(z+1)N)\tilde{\phi}(z+1)=\operatorname{exp}(-zN)T^{-1}T\tilde{\phi}(z)=\tilde{\psi}(z).
    \] 
    Thus $\tilde{\psi}$ descends to a single-valued map 
    $\tilde{\phi}: \Delta^*\rightarrow \check{\mathcal{D}}$. 
    Cornalba and Griffiths~\cite{Cornalba-Griffiths-75} proved that 
    $\tilde{\phi}$ has a removable singularity at the origin. Hence 
    \(\tilde{\phi}\) extends across the origin, and the limit point
    \(\tilde{\phi}(0)\) is defined to be the \emph{limiting Hodge filtration}
    \begin{equation}
        \label{eqs:lim-hodge-flit}
        \{F^p_\infty\} := \underset{\operatorname{Im}(z)\to \infty}{\lim} 
        \operatorname{exp}(-zN)\psi(z)\in \check{\mathcal{D}}.
    \end{equation}
    
    \begin{remark}
    Note that the filtration \(F^p_\infty\) is usually not a Hodge filration. 
    Nevertheless, Schmid's nilpotent orbit theorem exhibits that 
    \(\mathrm{exp}(zN)\cdot F^p_{\infty}\in D\) when \(\mathrm{Im}(z)\geq 0\),
    and it rapidly approaches to \(F^p_{z}\) when \(\mathrm{Im}(z)\) increases.
    \end{remark}
    The following proposition describes the monodromy weight filtration
    defined by a nilpotent map.
    
    \begin{proposition}\cite[Lem. 6.4]{Schmid-VHS-73}
    \label{prop:mono-weight-fil}
    Let $V$ be a finite dimensional linear space 
    over a field of characteristic zero with 
    a nondegenerate bilinear form $S$.
    Let $N: V\rightarrow V$ be 
    a linear map with $N^{m+1}=0$. Then there is a unique increasing 
    filtration $W_{\bullet}$ 
    \[
    0\subset W_0\subset \dots \subset W_{2m}=V
    \]
    such that
    \begin{enumerate}
        \item $N(W_l)\subset W_{l-2}$;
        \label{item:shift-two}
        \item for all $l\geq 0$, the map $N^l: \mathrm{Gr}^W_{m+l} V\rightarrow 
        \mathrm{Gr}^W_{m-l}V$ is an isomorphism.
        \label{item:lefschetz}
    \end{enumerate}
    For any $l\geq 0$, the primitive part $P_{m+l}\subset \mathrm{Gr}^W_{m+l}V$ 
    is the kernel of 
    \[
    N^{l+1}: \mathrm{Gr}^W_{m+l}V\rightarrow \mathrm{Gr}^W_{m-l-2}V,
    \]
    and set $P_{m-l}=0$. Then there is the decomposition of Lefschetz type
    \[        
    \mathrm{Gr}^W_k V\cong \bigoplus_{i} N^i(P_{k+2i}),~ i\geq \mathrm{max}\{m-k, 0\}.
    \]
    Assume that $N$ is an infinitesimal isometry:
    \[
    S(Nu, v)+S(u, Nv)=0, ~\forall u,v \in V.
    \]
    Then $W_l^{\perp}=W_{2m-l-1}$. Moreover, the spaces
    $\mathrm{Gr}^W_{m+l} V$ carry nondegenerate bilinear forms
     $S_l:=S(\cdot~, N^l\cdot)$, and $\mathrm{Gr}^W_{m-l} V$ carry 
     nondegenerate bilinear forms $S_l:=S((N^l)^{-1}\cdot, \cdot)$.
    \end{proposition} 

    Recall the family $\pi: \mathfrak{X}^*\rightarrow \Delta^*$.
    The nilpotent operator \(N\) defined in~\eqref{eqs:nil-map-log} 
    induces the limiting Hodge filtration \(F^{\bullet}_\infty\) as 
    in~\eqref{eqs:lim-hodge-flit} and the monodromy weight filtration 
    \(W_{\bullet}(N)\) in Proposition~\ref{prop:mono-weight-fil}.
    Schmid's nilpotent orbit theorem
    asserts the following 
    \begin{theorem}\cite[Thm. 6.16]{Schmid-VHS-73}
    \label{thm:lim-mix-hodge}
    The data 
    \[
    \mathrm{H}^m_{\lim}:=
    (\mathrm{H}, W_{\bullet}(N), F^{\bullet}_\infty).
    \] 
    forms a polarized mixed Hodge structure. And the nilpotent map 
    $N$ is a morphism of mixed Hodge structures of type $(-1,-1)$.
    \end{theorem}

\subsection{Clemens-Schmid exact sequence}
    Let \(\mathfrak{X}\) be a smooth variety over \(\mathbb{C}\).
    A one-parameter degeneration is a proper and flat morphism
    \(f: \mathfrak{X}\rightarrow \Delta\) such that \(f\) is smooth over
    the punctured disk \(\Delta^*\) and the center fiber 
    \(\mathfrak{X}_0\) is singular. The degeneration $f$ is called 
    \emph{semistable} if $\mathfrak{X}_0$ is 
    a simple normal crossing divisor in $\mathfrak{X}$.
    Suppose that $f: \mathfrak{X}\rightarrow \Delta$ is semistable. 
    The associated limit mixed Hodge structure for the family 
    \(\pi: \mathfrak{X}^*\rightarrow \Delta^*\) is related to 
    the mixed Hodge structure on the singular fiber $\mathfrak{X}_0$ 
    using the Clemens-Schmid exact sequence.

    The singular cohomology of $\mathfrak{X}_0$ carries 
    a canonical mixed Hodge structure. As a simple normal crossing divisor,
    this structure is described by combinatorial data of 
    the irreducible components \(\{X_i\}\) of $\mathfrak{X}_0$. Set
    \[
    \mathfrak{X}^{[p]}:=\bigsqcup_{i_0<\dots <i_p} X_{i_0}\cap\dots \cap X_{i_p}
    \]
    to be the disjoint union of the codimension $p$ stratum of 
    $\mathfrak{X}_0$. There is a spectral sequence
    \begin{equation}
    \label{eqs:snc-ss}
    E^{p,q}_1:= \mathrm{H}^q(\mathfrak{X}^{[p]}, \QQ)\Rightarrow 
    \mathrm{H}^m(\mathfrak{X}_0, \QQ), ~p+q=m,
    \end{equation}
    whose differential map 
    \[
    d_1: \mathrm{H}^q(\mathfrak{X}^{[p]}, \QQ)\rightarrow 
    \mathrm{H}^q(\mathfrak{X}^{[p+1]}, \QQ)
    \] 
    is induced by the natural combinatorial boundary map 
    $\iota_p: \mathfrak{X}^{[p+1]}\rightarrow \mathfrak{X}^{[p]}$. 
    One can put a weight filtration 
    \[
        W_k:=\bigoplus_{q\leq k} E^{*, q}_1.
    \]
    on the spectral sequence, which induces a weight filtration on 
    $\mathrm{H}^m(\mathfrak{X}_0, \QQ)$:
    \[
    0\subset W_0\mathrm{H}^m(\mathfrak{X}_0, \QQ)\subset \dots \subset 
    W_m\mathrm{H}^m(\mathfrak{X}_0, \QQ)=\mathrm{H}^m(\mathfrak{X}_0, \QQ).
    \]
    \begin{proposition}
    \label{prop:mhs-cent-fib}
    The spectral sequence~\eqref{eqs:snc-ss} degenerates at $E_2$. 
    In particular, the $k$-th subquotient 
    $\mathrm{Gr}_k^W\mathrm{H}^m(\mathfrak{X}_0, \QQ)$ 
    is isomorphic to the $E_2$-term ${E_2}^{m-k, k}$.
    \end{proposition} 
    The strata $\mathfrak{X}^{[p]}$ are smooth. Then the cohomology group 
    of $\mathfrak{X}^{[p]}$ carries the usual Hodge structure. 
    Through the spectral sequence \eqref{eqs:snc-ss} it induces 
    a decreasing (Hodge) filtration on $\mathrm{H}^m(\mathfrak{X}_0, \QQ)$. 
    Combining with the above weight filtration, they determine 
    the canonical mixed Hodge structure on $\mathrm{H}^m(\mathfrak{X}_0, \QQ)$. Denote by $\mathrm{H}^*$ (resp. $\mathrm{H}_*$) 
    the cohomology (resp. homology) group $\mathrm{H}^*(\mathfrak{X}_0, \QQ)$ 
    (resp. $\mathrm{H}_*(\mathfrak{X}_0, \QQ)$). 
    One can define a mixed Hodge structure on the homology group 
    $\mathrm{H}_*$ by duality. In particular, the weight filtration on 
    $\mathrm{H}_m$ is given
    \begin{equation}
    \label{eqs:weight-homology}
    W_{-k}\mathrm{H}_m:= \mathrm{Ann}(W_{k-1}\mathrm{H}^m)=
    \{\alpha\in \mathrm{H}_m ~|~ (\alpha, W_{k-1}\mathrm{H}^m)=0 \}
    \end{equation}
    It is easy to check that 
    $\mathrm{Gr}_{-k}^W\mathrm{H}_m\cong (\mathrm{Gr}_k^W\mathrm{H}^m)^*$.

    Let $\mathfrak{X}_t$ be a smooth fiber of \(f: \mathfrak{X}\to \Delta\). 
    If \(f\) is semistable, the total space $\mathfrak{X}$ admits 
    a deformation retraction $r: \mathfrak{X}\rightarrow \mathfrak{X}_0$.
    By the inclusion \(i: \mathfrak{X}_t\hookrightarrow \mathfrak{X}\),
    it induces the specialization map 
    \[
        \mathrm{sp}\colon \mathfrak{X}_t\to \mathfrak{X}_0.
    \]
    By Deligne's local invariant cycle theorem, any monodromy invariant
    class in \(\mathrm{H}^m(\mathfrak{X}_t, \mathbb{Q})\) is a image of 
    the cohomological specialization map
    \[
        \mathrm{sp}^*\colon \mathrm{H}^m(\mathfrak{X}_0, \mathbb{Q})\to \mathrm{H}^m(\mathfrak{X}_t, \mathbb{Q}).
    \]
    Using Wang's sequence~\cite[Sec.~8]{Milnor68} for the smooth family 
    $\mathfrak{X}^*\rightarrow \Delta^*$, one can obtain the 
    Clemens-Schmid exact sequence~\cite[p. 277]{Mixed-HS}
    \begin{equation}
    \label{eqs:CS-exact}
    \rightarrow \mathrm{H}_{2n+2-m}(\mathfrak{X}_0)\overset{\alpha}{\rightarrow} 
    \mathrm{H}^m(\mathfrak{X}_0)\overset{\mathrm{sp}^*}{\rightarrow} 
    \mathrm{H}^m(\mathfrak{X}_t)\overset{N}{\rightarrow} 
    \mathrm{H}^m(\mathfrak{X}_t)\overset{\beta}{\rightarrow} 
    \mathrm{H}_{2n-m}(\mathfrak{X}_0)\rightarrow.
    \end{equation}
    The maps $\alpha$ and $\beta$ are induced by the Poincar\'e duality,
    and \(n\) is the relative dimension of the family \(f\).
    \begin{theorem}\cite[\S 3]{Morrison84}
    \label{eqs:CS-exact-seq} 
    The morphism $\alpha, \mathrm{sp}^*, N, \beta$ are morphisms of 
    mixed Hodge structures of weight $(n+1, n+1), (0, 0), (-1,-1), (-n,-n)$.
    \end{theorem}
    The following result will be useful in the proof of Theorem~\ref{lim-mix-HS}.
    \begin{corollary}
    \label{cor:zero-nil-map}
    Denote by $\mathrm{H}^*_{\lim}$ 
    (resp. $\mathrm{H}^*, \mathrm{H}_*$) the vector space 
    $\mathrm{H}^*(\mathfrak{X}_t)$ 
    (resp. $\mathrm{H}^*(\mathfrak{X}_0), \mathrm{H}_*(\mathfrak{X}_0)$) 
    in the exact sequence \eqref{eqs:CS-exact}. Suppose $k>0$. 
    Then the $k$-th iterated nilpotent map 
    $N^k: \mathrm{H}^m_{\lim}\rightarrow \mathrm{H}^m_{\lim}$ 
    is zero if and only if $W_{m-k}\mathrm{H}^m=0$.
    \end{corollary}

    \begin{proof}
    The property~\eqref{item:lefschetz} in 
    Proposition~\ref{prop:mono-weight-fil} implies that 
    \[
    W_{m-k}\mathrm{H}^m_{\lim}=0 \text{~if and only if~} N^k=0, 
    \forall~ 0< k\leq m.
    \] 
    Let $K$ be the kernel of the nilpotent map 
    $N: \mathrm{H}^m_{\lim}\rightarrow \mathrm{H}^m_{\lim}$. 
   Then we have 
    \[
    W_{m-k}\mathrm{H}^m_{\lim}=0 \text{~if and only if~} 
    W_{m-k}\mathrm{H}^m_{\lim}\cap K=0.
    \] 
    In fact, it suffices to prove the right-hand side implies
    the left-hand side. Assume that $W_{m-k}\mathrm{H}^m_{\lim}\neq 0$. 
    Let $W_{m-j}\mathrm{H}^m_{\lim}\subset W_{m-k}\mathrm{H}^m_{\lim}$ 
    be the smallest nontrivial weight subspace. 
    Then \(N(W_{m-j})=0\) and we have 
    $0\neq W_{m-j}\mathrm{H}^m_{\lim}\subset W_{m-k}\cap K$ 
    as a contradiction.

    It remains to prove 
    \[
    W_{m-k}\mathrm{H}^m_{\lim}\cap K=W_{m-k}\mathrm{H}^m.
    \] 
    The morphism $\mathrm{sp}^*$ of mixed Hodge structures in 
    Theorem~\ref{eqs:CS-exact-seq} is strict. 
    Then the map $\mathrm{sp}^*: W_{m-k}\mathrm{H}^m\twoheadrightarrow 
    W_{m-k}\mathrm{H}^m_{\lim}\cap K$ is surjective. 
    The injectivity of \(\mathrm{sp}^*\) is as follows. Suppose that 
    $x\in W_{m-k}\mathrm{H}^m$ such that $\mathrm{sp}^*(x)=0$. 
    Again by the strictness of the morphism $\alpha$, there exists some 
    $y\in W_{-2n-2+m-k}\mathrm{H}_{2n+2-m}$ such that $\alpha(y)=x$. 
    However, we note that $W_{-2n-2+m-k}\mathrm{H}_{2n+2-m}=0$ 
    since $2n+2-m+k>2n+2-m$, see~\eqref{eqs:weight-homology}. 
    Thus our assertion follows.
    \end{proof}

\section{Degenerations to secant cubic hypersurfaces}\label{semistable-deg-secant-cubic}
\subsection{Severi variety and the secant variety}
\label{sec:Severi-rep-alg-grp}
We start with a brief review of Severi varieties and secant varieties via representations of (semi)simple algebraic groups. Let $S\subset \PP^{m+1}$ be a Severi variety of dimension $d=\frac{2}{3}(m-1)$. There is a linear algebraic group $H$, and an irreducible $H$-module $W\cong \CC^{m+2}$ such that $S$ can be identified with a specific $H$-orbit in $\PP(W)$. In his paper \cite[\S III, 2.5]{Zak93} Zak described the group $H$ and the representation $W$ for the four Severi varieties
\begin{enumerate}
    \item $d=2$, $H=SL(3, \CC)$, the space $W\cong \mathbb{C}^6$ consists of all $3\times 3$ symmetric matrices. The action of $H$ on $W$ is given by $A\mapsto gAg^\intercal$ where $g\in H$ and $A\in W$. The affine cone of the Veronese surface in $W$ is the $H$-orbit consisting of rank one symmetric matrices.
    \item $d=4$, $H=SL(3, \CC)\times SL(3, \CC)$, the space $W$ consists of rank $3$ matrices. The action of $H$ on $W$ is given by $A\mapsto gAh^\intercal$ for any $(g,h)\in H$ and $A\in W$. The affine cone of the Segre fourfold $\PP^2\times\PP^2$ in $W$ is the $H$-orbit consisting of rank one matrices.
    \item $d=8$, $H=SL(6, \CC)$, the space $W$ consists of rank $6$ skew-symmetric matrices. The action of $H$ on $W$ is given by $A\mapsto gAg^\intercal$ where $g\in H$, $A\in W$. The affine cone of the Grassmannian $\mathrm{Gr}(2,6)$ in $W$ is given by the $H$-orbit consisting of rank two skew-symmetric matrices.
    \item $d=16$, $H=E_6$, let $\mathbb{O}$ be the octonion algebra over $\RR$. Let $\mathcal{H}_\mathbb{O}$ denote the space of $\mathbb{O}$-Hermitian matrices 
\[
\Bigg\{  
\begin{pmatrix}
           c_1 &            x_1 & x_2\\
\overline{x_1} &            c_2 & x_3\\
\overline{x_2} & \overline{x_3} & c_3 
\end{pmatrix}, c_i\in \RR, x_i\in \mathbb{O} 
\Bigg\}.
\]
The space $W$ is the $27$-dimensional exceptional Jordan algebra ${\mathcal{H}_\mathbb{O}}\otimes_\RR \CC$ with the Jordan multiplication $A\circ B=\frac{1}{2}(AB+BA)$. The subgroup $SL_3(\mathbb{O})$ of $GL({\mathcal{H}_\mathbb{O}}\otimes_\RR \CC)$ preserving the determinant is isomorphic to the adjoint group $E_6$, see \cite[\S 3]{Landsberg96}. The affine cone of the Cayley plane $\mathbb{OP}^2$ is given by the $H$-orbit consisting of rank one matrices in ${\mathcal{H}_\mathbb{O}}\otimes_\RR \CC$.
\end{enumerate}
Moreover, the secant variety $\Sec(S)$ of $S$ is given by the cone of degenerate matrices $\{A\in W \mid \det(A)=0 \}$, which is the reduced hypersurface in $\PP(W)$ defined by the determinant equation. For a skew-symmetric matrix $A$ we have $\det(A)=\operatorname{Pf}(A)^2$ where $\operatorname{Pf}(A)$ is the Pfaffian of $A$. In this case we refer to the Pfaffian as the defining equation of the reduced hypersurface $\Sec(S)$. Hence the secant variety of a Severi variety is a cubic hypersurface whose singular locus is $S$.

\subsection{Projective dual variety}
We review the geometry of secant variety from the perspective of 
the dual variety, especially the notions of \emph{secant locus} and 
\emph{contact locus}. These will be used later in the description of 
the central fiber through semistable reduction and the computation of 
the limit mixed Hodge structure. 

Let \(X\) be an irreducible nondegenerate closed subvariety in \(\mathbb{P}^N\). 
For a nonsingular closed point \(x\in X\), we denote by \(\hat{T}_x X\subset \mathbb{P}^N\) 
the embedded projective tangent space of \(X\) at \(x\); see~\cite[\S 1.1]{Tevelev:proj-dual}
for details. We say that a hyperplane \(H\subset \mathbb{P}^N\) is 
tangent to \(X\) at \(x\) if \(\hat{T}_x X\subset H\). Consider the
incidence correspondence
\[
I^{\circ}_X\coloneqq \{(x, [H])\in X\times (\mathbb{P}^N)^{\vee} \mid x\in X_{\mathrm{sm}}, ~\hat{T}_x X\subset H\}.
\]
The \emph{conormal variety} \(\mathcal{C}_X\) of \(X\) is defined 
to be the Zariski closure of \(I^{\circ}_X\). The \emph{dual variety}
\(X^{\perp}\) is the image of the projection 
\[
g\colon \mathcal{C}_X\to (\mathbb{P}^{N})^{\vee}.
\]
In particular, if \(X\) is a hypersurface, the conormal variety 
\(\mathcal{C}_X\) coincides with the graph of the Gauss map 
\begin{equation}
\label{eqs:Gauss-map}
    \gamma_{N-1}\colon X\dashrightarrow (\mathbb{P}^N)^{\vee}
\end{equation}
where \(\gamma_{N-1}(x)=\hat{T}_x X\) for any \(x\in X_{sm}\). 
Then \(X^{\perp}\) is the image of the rational map \(\gamma_{N-1}\).

Let \(u\) be a closed point in \(X^{\perp}\), and \(H_u\subset \mathbb{P}^N\) 
the corresponding hyperplane. The fiber \(C_u\coloneqq g^{-1}(u)\) 
of the projection \(g\colon \mathcal{C}_X\to X^{\perp}\), i.e., 
the closure of the subset
\[
    \{x\in X \mid x\in X_{sm}, ~\hat{T}_x X\subset H_u\},
\]
is the so-called \emph{contact locus} of \(H_u\). By the reflexivity 
theorem of projective duality~\cite[Thm. 1.7]{Tevelev:proj-dual}, 
the contact locus \(C_u\) is a linear subspace of dimension 
\(N-1-\dim X^{\perp}\) for a general point \(u\in X^{\perp}\).

Let \(S\subset \mathbb{P}^{m+1}\) be a Severi variety, and \(\Sec(S)\) the secant variety.
For any \(p\in \Sec(S)-S\), the \emph{secant locus} \(Q_p\) 
is the subset 
\begin{equation}
\label{eqs:secant-locus}
    \{x\in S \mid \overline{xp} \text{ is a secant or tangent line to } X\}.
\end{equation}
Fix the embedded tangent hyperplane \(\hat{T}_p \Sec(S)\) of \(\Sec(S)\) at some
\(p\in \Sec(S)-S\). The corresponding contact locus \(\Sigma_p\) is
\begin{equation}
\label{eqs:contact-locus}
\overline{\{z\in \Sec(S)-S \mid \hat{T}_z \Sec(S)=\hat{T}_p \Sec(S)\}}.
\end{equation}

\begin{proposition}
\label{prop:geom-secant-contact-locus}
    Let \(S\subset \mathbb{P}^{m+1}\) be a Severi variety of dimension
    \(d\), and \(\Sec(S)\) the secant variety. For any \(p\in \Sec(S)-S\),
    we have the following:
    \begin{enumerate}
        \item the contact locus \(\Sigma_p\) is a linear subspace of 
        dimension \(\frac{d}{2}+1\), and the secant locus \(Q_p\) is 
        a smooth quadric hypersurface of dimension \(\frac{d}{2}\) 
        in \(\Sigma_p\). In particular, \(S\cap \Sigma_p=Q_p\).
        \item For any \(x\in S\), the subvariety of tangent hyperplanes 
        \[
        Q^x\coloneqq \{[\hat{T}_q \Sec(S)]\in \Sec(S)^{\perp}\mid x\in Q_q, q\in \Sec(S)-S\}
        \]
        is isomorphic to a smooth quadric of dimension \(\frac{d}{2}\).
    \end{enumerate}
\end{proposition}
\begin{proof}
The proof can be found in~\cite[Thm. 3]{Fuji-Rob81} 
and~\cite[\S IV, Prop. 2.1, Prop. 3.1]{Zak93}.
\end{proof}

\begin{corollary} 
\label{cor:smooth-quadric-bundles}
Let \(\overline{\Sec(S)}\) be the blowup of \(\Sec(S)\) along \(S\),
and \(E_0\) the exceptional divisor. Then \(\overline{\Sec(S)}\) is
nonsingular, and the natural projection \(\pi\colon E_0\to S\) is a 
smooth quadric bundle.
\end{corollary}
\begin{proof}
Suppose that \(F\) is the defining polynomial of \(\Sec(S)\). 
The ideal of the singular locus \(S\) in \(\Sec(S)\) is 
generated by the partial derivatives of $F$. 
The blowup of \(\Sec(S)\) along \(S\) can therefore be identified with
the graph of the rational map \(p\mapsto (\partial_0 F(p),\cdots, \partial_{m+1} F(p))\),
which is precisely the Gauss map on \(\Sec(S)\). 
Hence \(\overline{\Sec(S)}\) is the conormal variety of \(\Sec(S)\). 
Recall that the dual variety \(\Sec(S)^{\perp}\) is isomorphic to the 
Severi variety \(S\) in the dual space \((\mathbb{P}^{m+1})^{\vee}\), 
and the conormal variety is a projective bundle over \(\Sec(S)^{\perp}\).
Hence \(\overline{\Sec(S)}\) is nonsingular.

For any \(x\in S\), we have \(\mathrm{mult}_x \Sec(S)=2\)~\cite[\S IV, Thm. 2.4]{Zak93},
which implies that \(\pi\colon E_0\to S\) is a quadric bundle. We further 
show that each fiber of \(\pi\) is smooth. By the description of the 
conormal variety and the contact locus~\eqref{eqs:contact-locus}, the 
fiber \(\pi^{-1}(x)\) for any \(x\in S\) consists of the pairs 
\((x, [\hat{T}_p \Sec(S)])\) for which \([\hat{T}_p \Sec(S)]\in \Sec(S)^{\perp}\) 
is the tangent hyperplane of \(\Sec(S)\) at some point \(p\), and \(x\) 
is contained in the contact locus \(\Sigma_p\). It follows from 
Proposition~\ref{prop:geom-secant-contact-locus} that \(x\in Q_p\) and 
\(\pi^{-1}(x)\) is the smooth quadric \(Q^x\).
\end{proof}

\subsection{Semistable degeneration}
Let \(S\) be a Severi variety of dimension \(d\), and \(X_0=\Sec(S)\)
the secant cubic hypersurface of dimension \(m=\frac{3}{2}d+1\). 
Denote by \(F\) the defining equation of \(X_0\). 
Let \(X_{\infty}\) be a nonsingular cubic \(m\)-fold defined by 
a homogeneous polynomial \(G=0\) such that \(X_{\infty}\) intersects 
transversely the smooth locus of \(X_0\) and \(S\). We consider 
the one-parameter degeneration of cubic \(m\)-folds
\begin{equation}
\label{eqs:one-parameter-degen}
F+tG=0
\end{equation}
with \(t\in \Delta\). To obtain a semistable model of this family, 
we take the base change \(t=s^2\). Denote by \(\mathcal{X}\subset \mathbb{P}^{m+1}\times \Delta\) 
the hypersurface defined by the equation 
\[
F+s^2G=0.
\] 
We will see that \(\mathcal{X}\) has singularities along \(S\times \{0\}\).
In Proposition~\ref{prop:semi-stable-red}, we show that the blow-up 
of \(\mathcal{X}\) along \(S\times \{0\}\) yields a semistable degeneration.

\begin{proposition}
\label{prop:semi-stable-red}
With the above notations, the blowup \(\mathfrak{X}\) of \(\mathcal{X}\) 
along \(S\times \{0\}\) is nonsingular. 
The central fiber of the family \(\pi\colon \mathfrak{X}\to \Delta\) 
consists of two irreducible components:
\begin{itemize}
\item the blowup \(\overline{X_0}\) of \(X_0\) along \(S\); 
\item the exceptional divisor \(E\) in \(\mathfrak{X}\). 
\end{itemize}
The two components appear in \(\pi^{-1}(0)\) with multiplicity one, 
and their transversal intersection \(\overline{X_0}\cap E\) is the 
exceptional divisor in \(\overline{X_0}\).
\end{proposition}

\begin{proof}
Let \((z_1,\ldots, z_{m+1}, s)\) be a local affine coordinate in
\(\PP^{m+1}\times {\Delta}\), and \(f\) (resp. \(g\)) the local 
equation of $F$ (resp. $G$). The hypersurface \(\mathcal{X}\) 
is locally defined by the equation \(f+s^2g=0\). Consider the 
derivatives
\[  
\frac{\partial (f+s^2g)}{\partial s}=2sg,\ \frac{\partial (f+s^2g)}{\partial z_i}=\frac{\partial f}{\partial z_i}+s^2\frac{\partial g}{\partial z_i},\ 1\leq i\leq m+1.
\]
Assume that \(p\) is a singular point which is not on the central fiber, 
i.e., \(s(p)\neq 0\). By the derivatives, we have \(g(p)=f(p)=0\) and
\(\frac{\partial f}{\partial z_i}(p)=-s(p)^2\frac{\partial g}{\partial z_i}(p)\) 
for all \(1\leq i\leq m+1\). Recall that \(X_\infty\) is nonsingular, 
i.e., \(\frac{\partial g}{\partial z_i}(p)\neq 0\) for some \(i\). 
Hence \(p\) lies in the smooth locus of \(X_0\), and \(X_\infty\) is 
tangent to \(X_0\) at \(p\), which violates the assumption that 
they intersect transversely. Thus the singular locus is contained 
in the central fiber. Moreover, any singular point \(p\) satisfies 
\(\frac{\partial f}{\partial z_i}(p)=0\) for all \(i\). Therefore the 
singular locus of \(\mathcal{X}\) is \(S\times \{0\}\).

Take an open affine chart 
\((x_1,\ldots, x_d, y_1, \ldots, y_{\frac{d}{2}+2} ,s)\) in
\(\PP^{m+1}_{\Delta}\) such that \((x_1, \ldots, x_d)\) is the 
local chart of the Severi variety \(S\). Since \(X_\infty\) 
intersects transversely along \(S\), we may assume that the equation 
\(g\) has the form \(x_d+g'\) where \(g'\) contains no monomials 
of variables \(x_1,\ldots, x_d\). 

It is known that the multiplicity \(\mathrm{mult}_x(X_0)\) is \(2\) for 
any \(x\in S\). Thus the local equation \(f\) of \(X_0\), under 
the above coordinates, can be written as 
\[
q(x_1,\ldots, x_d, y_1,\ldots, y_{\frac{d}{2}+2})+h(x_1,\ldots, x_d, y_1,\ldots, y_{\frac{d}{2}+2})
\]
such that for any \(\underline{a}=(a_1,\ldots, a_d)\in S\), the equation
\(q_a\coloneqq q(\underline{a}, y_1,\ldots, y_{\frac{d}{2}+2})\) is a 
homogeneous quadratic form, and the degree of the homogeneous equation
\(h(\underline{a}, y_1,\ldots, y_{\frac{d}{2}+2})\) is \(\geq 3\). Moreover, 
the quadratic form \(q_a=0\) corresponds to the projective normal cone of
\(\mathcal{C}_{S/X_0}\) at \(a\in S\), which is a smooth quadric by 
Corollary~\ref{cor:smooth-quadric-bundles}. Hence, \(q_a\) is non-degenerate
in variables \((y_1, \ldots, y_{\frac{d}{2}+2})\).

Consider the blowing up \(\tau\colon \widetilde{\mathbb{C}}^{m+2}\to \mathbb{C}^{m+2}\) 
along the closed subvariety \(y_1=\ldots=y_{\frac{d}{2}+2}=s=0\).
Using the coordinates
\[
(x_1,\ldots, x_d, y_1, \ldots, y_{\frac{d}{2}+2} ,s),[w_1: \ldots: w_{\frac{d}{2}+2}: u]
\]
for \(\mathbb{C}^{m+2}\times \mathbb{P}^{\frac{d}{2}+2}\), the ideal
sheaf of \(\widetilde{\mathbb{C}}^{m+2}\) is generated by the relations 
\[
y_iw_j-y_jw_i=0, y_iu-sw_i=0, 1\leq i,j\leq \frac{d}{2}+2.
\]
Then \(\mathfrak{X}\) is the proper transform of \(\mathcal{X}\) 
in the blowing up \(\tau\). Since \(\mathcal{X}\) 
is nonsingular away from \(S\times \{0\}\), it suffices to verify
the smoothness of \(\mathfrak{X}\) on the exceptional divisor. 

Let \(U_i\subset \widetilde{\mathbb{C}}^{m+2}\) be the open affine chart 
with \(w_i=1\), and \(\sigma\colon U_i\to \mathbb{C}^{m+2}\) the natural map 
\[
\sigma(\underline{x}, w_1,\ldots, y_i,\ldots,w_{\frac{d}{2}+2},u)=(\underline{x}, y_iw_1,\ldots, y_i,\ldots,y_iw_{\frac{d}{2}+2},y_iu).
\]
Via the local forms of \(f\) and \(g\), the ideal of
\(\mathfrak{X}\) in \(U_i\) is generated by
\[
    q(\underline{x}, w_1,\ldots, 1, \ldots, w_{\frac{d}{2}+2})+y_i\tilde{h} +u^2(x_d+y_i\tilde{g}')
\]
where \(\tilde{h}\) and \(\tilde{g}'\) are certain polynomials. 
The central fiber of \(\pi\circ \sigma\colon U_i\to \Delta\) 
is the subscheme defined by \(s=y_iu=0\), where \(\{y_i=0\}\) 
is the exceptional divisor \(E\) in \(\mathfrak{X}\) and \(\{u=0\}\) 
is the blowup \(\overline{X_0}\). Locally the exceptional divisor \(E\)
is the quadric fibration over \(S\) characterized by the equation 
\(q+u^2x_d=0\), and their intersection \(E\cap \overline{X_0}\), given 
by \(q=0\), is exactly the exceptional divisor \(E_0\) in \(\overline{X_0}\). 
For any point \(p=(\underline{a},b_1,\ldots, 0,\ldots,b_{\frac{d}{2}+2}, u)\) in \(E\),
we have
\[
  \frac{\partial (q+u^2x_d+y_i(\tilde{h}+\tilde{g}'))}{\partial w_j}(p)=\frac{\partial q}{\partial w_j}(\bar{p}), 
  \forall j\neq i
\]
where \(\bar{p}=(\underline{a},b_1,\ldots, \hat{b}_i,\ldots,b_{\frac{d}{2}+2})\in E_0\).
By Lemma~\ref{cor:smooth-quadric-bundles}, the exceptional divisor \(E_0\) 
is a smooth quadric bundle over \(S\), so it follows that 
\(\frac{\partial q}{\partial w_j}(\bar{p})\neq 0\) for some \(j\neq i\).
Hence \(E\) is nonsingular at \(p\).

Let \(V\subset \widetilde{\mathbb{C}}^{m+2}\) be the open affine chart
with \(u=1\), and \(\sigma\colon V\to \mathbb{C}^{m+2}\) the natural map
\[
    \sigma(\underline{x},\underline{w},s)=(\underline{x},\underline{sw},s).
\]
The ideal of \(\mathfrak{X}\) in \(V\) is generated by 
\[
    q(\underline{x}, \underline{w})+s\tilde{h}+x_d+s\tilde{g}'
\]
where \(\tilde{h}\) and \(\tilde{g}'\) are certain polynomials.
And the exceptional divisor \(E\) in the chart \(V\) is given by 
\(s=0\). Again, for any point \(p=(\underline{a}, \underline{b}, 0)\) 
in \(E\), we have
\[
 \frac{\partial (q+x_d+s(\tilde{h}+\tilde{g}'))}{\partial w_j}(p)=\frac{\partial q}{\partial w_j}(\bar{p}),
 \forall j    
\]
where \(\bar{p}=(\underline{a},\underline{b})\). 
For any fixed \(\underline{a}\in S\), \(q_a\) is a non-degenerate quadratic
form. Hence \(\frac{\partial q}{\partial w_j}(\bar{p})=0\) for all 
\(1\leq j\leq \frac{d}{2}+2\) if and only if \((\underline{b})=(\underline{0})\). 
If \((\underline{b})=(\underline{0})\) we obtain
\[
    \frac{\partial (q+x_d)}{\partial x_d}(p)=\frac{\partial q}{\partial x_d}(\bar{p})+1\neq 0,
\]
which asserts that \(p\in E\) is a nonsingular point in \(\mathfrak{X}\).

By now we have proved \(\mathfrak{X}\) is nonsingular, and the central fiber 
of \(\mathfrak{X}\rightarrow \Delta\) has two irreducible components: 
(1) the proper transform \(\overline{X_0}\) of \(X_0\); 
(2) the exceptional divisor \(E\) in \(\mathfrak{X}\). 
Lemma~\ref{cor:smooth-quadric-bundles} asserts that \(\overline{X_0}\) 
is nonsingular. Through the above computation, the exceptional divisor 
\(E\), defined by the equation \(q(\underline{x},\underline{w},u)=0\), 
is nonsingular as well. 
In addition, the two components have multiplicity one, and intersect 
transversely along \(E_0\). Combining all together we conclude that 
\(\mathfrak{X}\to \Delta\) is a semistable degeneration.
\end{proof}

We have seen that the exceptional divisor \(E\) is a 
quadric bundle over \(S\). But the bundle map \(h \colon E\to S\) 
is not smooth. We characterize the discriminant locus of 
the quadric bundle, which is necessary to compute the cohomology 
of \(E\) in Section~\ref{sec:coh-quadric-bundle}.

\begin{corollary}\label{excep-div-quad-fib}
The quadric bundle \(h \colon E\to S\) is of relative dimension 
\(\frac{d}{2}+1\). The discriminant locus of \(h\) is the smooth
divisor \(V\coloneqq S\cap X_{\infty}\). The fiber $E_s$ for any 
$s\in V$ is a quadric cone whose vertex is a single point.
\end{corollary}
\begin{proof}
In the proof of Proposition~\ref{prop:semi-stable-red}, we have seen 
the local equation of \(E\) is presented by 
\[
    q(\underline{x},\underline{w})+u^2x_d=0.
\]
For any \(\underline{a}=(a_1,\ldots, a_d)\in S\), the quadratic form
\(q_a(w_1,\ldots, w_{\frac{d}{2}+2})=q(\underline{a}, \underline{w})\) 
is non-degenerate. It follows that the quadratic form 
\[
q_a(w_1,\ldots, w_{\frac{d}{2}+2})+u^2a_d=0
\] 
in the coordinate \((w_1,\ldots, w_{\frac{d}{2}+2}, u)\) is 
non-degenerate (resp. degenerate of corank \(1\)) if \(a_d\neq 0\) 
(resp. \(a_d=0\)). 

Recall that \(x_d=0\) is the local defining equation of the cubic 
hypersurface \(X_{\infty}\). Therefore, the fiber \(E_s\) is a smooth 
quadric of dimension \(\frac{d}{2}+1\) if \(s\in S\setminus V\), 
and is a quadric cone with corank \(1\) (i.e. the vertex is a single point) 
if \(s\in V\).
\end{proof}

\subsection{Main statement}
Suppose that $S\subset \PP^{m+1}$ is a Severi variety of dimension 
$d=\frac{2(m-1)}{3}$. By the classification of Severi varieties 
we have $d=2, 4, 8, 16$ and $m=4, 7, 13, 25$ respectively. Let 
$\mathrm{H}^{m}_{\lim}$ be the limit mixed Hodge structure 
associated with the one-parameter degeneration~\eqref{eqs:one-parameter-degen}
of cubic \(m\)-folds, and \(N\) be the nilpotent operator.
\begin{theorem}
\label{lim-mix-HS}
Keep the above notations. Let $V\subset S$ be the hypersurface cut out by 
the smooth fiber \(X_{\infty}\) of the one-parameter degeneration~\eqref{eqs:one-parameter-degen}. 
\begin{itemize}
    \item \(d=2, m=4\). The nilpotent operator \(N\) is zero, 
    that is, the limit mixed Hodge structure \(\mathrm{H}_{\lim}^4\) is pure. 
    Moreover, \(\mathrm{H}_{\lim}^4\) contains a rank \(2\) primitive sublattice 
    \(M\) of determinant two. The orthogonal complement \(M^{\perp}\) 
    is isomorphic to the primitive Hodge structure \(\mathrm{H}^2(Y)_{\mathrm{prim}}(-1)\) 
    where \(Y\) is the K3 surface obtained as the double cover of \(S\) along the sextic curve \(V\). 
    \item \(d > 2\). The nilpotent operator \(N\) has index $2$, i.e., $N\neq 0, N^2=0$. 
    Moreover, \(\mathrm{H}_{\lim}^m\) admits the monodromy weight filtration 
    \[
    0\subset W_{m-1}\subset W_{m}\subset W_{m+1}=\mathrm{H}_{\lim}^m
    \]
    satisfying
\begin{enumerate}
    \item $\mathrm{Gr}_{m-k}^W \mathrm{H}^m_{\lim}$ is isomorphic to the Tate twist $\QQ(\frac{k-m}{2})$ if $k=\pm1$;
    \item $\mathrm{Gr}_m^W \mathrm{H}^m_{\lim}\cong \mathrm{H}^{d-1}(V, \QQ)$ is an isomorphism of polarized Hodge structures.
\end{enumerate}
\end{itemize}
\end{theorem}

The assertion for \(d=2\) was proved by Hassett in an unpublished version
of the paper~\cite{Hassett-cubic-00}. One can find the same result in 
Laza's paper~\cite[Thm. 4.1.1]{Laza-cubic-period-10}. In the present paper
we prove the assertion for \(d>2\). The significant change of 
the index of nilpotence for the two situations is due to the parity of \(m\),
which causes the vanishing of the cohomology of the central fiber. 
Nevertheless, the main part of \(\mathrm{H}^m_{\lim}\), i.e., \(\mathrm{Gr}^W_m\mathrm{H}^m_{\lim}\)
is related to the Hodge structure of the hypersurface \(V\), which is
completely determined by the degeneration. This result indicates that the limit
mixed Hodge structure reflects the geometry of the degeneration in a unified way.

Before giving the proof, let us digress on the Hodge structure 
of the hypersurface \(V\) of the Segre fourfold \(S\), which motivates
the result of Theorem~\ref{lim-mix-HS}.

\begin{example}
\label{HS-CY-type} 
Let $\PP^2\times \PP^2\hookrightarrow \PP^8$ be the Segre fourfold, 
and $X_{\infty}\subset \PP^8$ be a generic smooth cubic sevenfold. 
The Hodge structure of a smooth cubic sevenfold is of Calabi-Yau type
with the Hodge numbers 
\[
h^{7,0}=h^{6,1}=0, h^{5,2}=1, h^{4,3}=84.
\]
Consider the $(3,3)$-hypersurface $V:=X_{\infty}\cap (\PP^2\times \PP^2)$. 
By the adjunction formula
\[
K_V\cong(K_{\PP^2\times \PP^2}+\mathcal{O}(V))|_V=\OO_V,
\]
the hypersurface $V$ is of Calabi-Yau type. By the Lefschetz hyperplane theorem
\[
\mathrm{H}^i(\PP^2\times \PP^2, \mathbb{Z})\xrightarrow{\sim} \mathrm{H}^i(V, \ZZ), ~\forall i\leq 2
\]
we have $\mathrm{H}^i(V, \OO_V)=0$ for $0\leq i\leq 2$. 
The Hodge numbers $h^{1,2}$ of \(V\) can be calculated by the Euler characteristic $c_3(T_V)$.
Through the normal bundle exact sequence
\[
0\rightarrow T_V\rightarrow T_{\PP^2\times \PP^2}|_V\rightarrow N_{V/\PP^2\times \PP^2}\cong \OO(3,3)\rightarrow 0
\]
we get $c(T_V)\cdot c(N_{V/\PP^2\times \PP^2})=c(T_{\PP^2\times \PP^2})|_V$. 
Denote by $H_i$ the hyperplane class on the \(i\)-th component of 
\(\PP^2\times \PP^2\) for \(i=1,2\). It follows that
\[
\sum_i c_i(V)(1+3(H_1+H_2))=(1+3H_1+3H_1^2)(1+3H_2+3H^2_2))|_V.
\]
As a result, 
\[
c_3(V)=-27(H^2_1H_2+H_1H^2_2)|_V.
\]
Then $\deg c_3(T_V)=-162$ and $h^{1,2}=h^{2,1}=83$. 
The Hodge diamond of $V$ turns out to be
\[
\begin{matrix}
h^0 &   &   &         &1&         &  &   \\
h^1 &   &   &    0    & &    0    &  &   \\
h^2 &   & 0 &         &2&         &0 &   \\
h^3 & 1 &   & 83 & & 83 &  & 1.
\end{matrix}
\]
We see that the Hodge structures of \(X_{\infty}\) and \(V\) are quite similar.
We expect, at least for numerical reasons, that the Hodge structure of $V$ relates to 
the limit mixed Hodge structure of the degeneration to the secant cubic.
\end{example}

\begin{proof}[Proof of Theorem \ref{lim-mix-HS}]
Let $\mathfrak{X}_0$ be the central fiber of the semistable degeneration 
$f: \mathfrak{X}\rightarrow \Delta$. By Proposition~\ref{prop:semi-stable-red}
$\mathfrak{X}_0$ has two irreducible components $\conorm$ and $E$ 
that intersect along $E_0$.
The associated Clemens-Schmid exact sequence is
\[
\cdots \rightarrow \mathrm{H}_{m+2}(\mathfrak{X}_0)\overset{\alpha}{\rightarrow} \mathrm{H}^m(\mathfrak{X}_0)\overset{\mathrm{sp}^*}{\rightarrow} \mathrm{H}^m_{\lim}\overset{N}{\rightarrow} \mathrm{H}^m_{\lim}\overset{\beta}{\rightarrow} \mathrm{H}_m(\mathfrak{X}_0)\rightarrow \cdots.
\]
Denote by $\mathrm{H}^*$ (resp. $\mathrm{H}_*$) the cohomology group 
$\mathrm{H}^*(\mathfrak{X}_0)$ (resp. homology group $\mathrm{H}_*(\mathfrak{X}_0)$). 
To prove $N^2=0$, it suffices to show $W_{m-2}\mathrm{H}^m=0$ by Corollary \ref{cor:zero-nil-map}. Recall the spectral sequence~\eqref{eqs:snc-ss}
\[
E^{p,q}_1= \mathrm{H}^q(\mathfrak{X}^{[p]}, \QQ)\Rightarrow \mathrm{H}^m(\mathfrak{X}_0).
\]
For $p>1$ the term $E^{p,q}_1$ vanishes since $\mathfrak{X}_0$ has only two components. 
Since the spectral sequence degenerates at \(E_2\) we have
\[
    \mathrm{Gr}_{m-k}^W\mathrm{H}^m\cong E^{k, m-k}_2.
\] 
Hence $\mathrm{Gr}_{m-k}^W\mathrm{H}^m=0$ and thus $W_{m-k}\mathrm{H}^m=0$ for \(k>1\), 
which implies $N^2=0$.

The strictness of the morphisms in the Clemens-Schmid exact sequence yields the long exact sequence
\[
\rightarrow \mathrm{Gr}^W_{-m-k-2}\mathrm{H}_{m+2}\overset{\alpha}{\rightarrow} \mathrm{Gr}^W_{m-k}\mathrm{H}^m\overset{\mathrm{sp}^*}{\rightarrow} \mathrm{Gr}^W_{m-k}\mathrm{H}^m_{\lim}\overset{N}{\rightarrow} \mathrm{Gr}^W_{m-k-2}\mathrm{H}^m_{\lim}\rightarrow 
\]
on the graded weight spaces. 
We already know $\mathrm{Gr}^W_{m-k-2}\mathrm{H}^m_{\lim}=0$ for $k\geq 0$ since \(N^2=0\). 
Moreover, we claim that the map $\alpha$ is zero for $k\geq 0$, thus
\[
\mathrm{Gr}^W_{m-k}\mathrm{H}^m\cong \mathrm{Gr}^W_{m-k}\mathrm{H}^m_{\lim}, ~k\geq 0.
\] 
It follows from Proposition~\ref{prop:mhs-cent-fib} that
\[
\mathrm{Gr}^W_{-m-k-2}\mathrm{H}_{m+2}\cong (\mathrm{Gr}^W_{m+k+2}\mathrm{H}^{m+2})^*\cong (E^{-k,m+k+2}_2)^*.
\] 
It is direct to see $E^{-k,m+k+2}_2=0$ for $k>1$ by the spectral sequence \eqref{eqs:snc-ss}. For $k=0$ we have
\[
(E^{0,m+2}_2)^*\cong \mathrm{Coker} (\mathrm{H}_{m+2}(E_0)\rightarrow \mathrm{H}_{m+2}(\conorm)\oplus \mathrm{H}_{m+2}(E)).
\]
Recall from Corollary~\ref{cor:smooth-quadric-bundles} that $\conorm$ is a projective bundle over the dual variety $X_0^*$ of the secant cubic $X_0$, and $E_0$ is a smooth family of quadric bundles over $X_0^*$. Note that the dimension of $X_0^*$ is even since the dual variety $X_0^*$ is isomorphic to the Severi variety in the dual space. Then the odd degree cohomology groups of $\conorm$ and $E_0$ vanish. As the integer $m$ is odd in our situation, we get $W_{-m-2}\mathrm{H}_{m+2}\cong \mathrm{H}_{m+2}(E)$. By the Poincar\'e duality $\mathrm{H}_{m+2}(E)\cong \mathrm{H}^{m-2}(E)$. In Corollary \ref{vanish-coh-quad-bdl} we prove $\mathrm{H}^{m-2}(E)=0$. Therefore, the map $\alpha$ vanishes, and our assertion follows.

When $k=0$, we have 
\[
\mathrm{Gr}^W_m \mathrm{H}^m\cong E^{0,m}_2=\mathrm{Ker}(\mathrm{H}^m(\conorm)\oplus \mathrm{H}^m(E)\rightarrow \mathrm{H}^m(E_0)).
\]
By the same reasoning as above $\mathrm{H}^m(\conorm)=\mathrm{H}^m(E_0)=0$,
so $\mathrm{Gr}^W_m \mathrm{H}^m=\mathrm{H}^m(E)$. We later prove that 
\[
\mathrm{H}^m(E, \QQ)\cong \mathrm{H}^{d-1}(V, \QQ)
\]
as an isomorphism of polarized Hodge structures in Proposition~\ref{prop:polarized-HS}.

The graded piece $\mathrm{Gr}^W_{m-1} \mathrm{H}^m$ is the cokernel of the following map
\begin{equation}\label{weight-graded-m-1}
\rho: \mathrm{H}^{m-1}(\conorm)\oplus \mathrm{H}^{m-1}(E)\rightarrow \mathrm{H}^{m-1}(E_0).
\end{equation}
For the smooth quadric bundle $\pi: E_0\rightarrow S$, the Leray spectral sequence
\[
E^{p,q}_2:=\mathrm{H}^p(S, R^q\pi_*\QQ)\Rightarrow \mathrm{H}^{m-1}(E_0), p+q=m-1
\]
degenerates at $E_2$. Note that the base space $S$ is simply connected. The trivial local system $R^q\pi_*\QQ$ is the constant sheaf $\mathrm{H}^q(F)$ where $F$ is the fiber of $\pi$. Then it gives rise to a (non-canonical) decomposition
\[
\bigoplus_{p+q=m-1}\mathrm{H}^p(S)\otimes \mathrm{H}^q(F)\cong \mathrm{H}^{m-1}(E_0).
\]
The cohomology classes of $S$ are algebraic since $S$ is a homogeneous space. Hence the cohomology classes of $E_0$ are algebraic, and the Hodge structure of $\mathrm{H}^{m-1}(E_0)$ is isomorphic to $\QQ(\frac{1-m}{2})^r$ where $r$ is the rank of $\mathrm{H}^{m-1}(E_0)$. Moreover, we show the quotient Hodge structure \(\operatorname{Coker}(\rho)\) is isomorphic to $\QQ(\frac{1-m}{2})$. The proof is given in the next lemma because it is a bit lengthy to include here.
\end{proof}

\begin{lemma}\label{one-dim-tate-twist}
Let $S\subset \PP^{m+1}$ be the Severi variety of dimension $d=4, 8, 16$. Let $F$ be any fiber of the quadric bundle $\pi: E_0\rightarrow S$ in Corollary~\ref{cor:smooth-quadric-bundles}. Then the cokernel of the map \eqref{weight-graded-m-1}
\[
\rho: \mathrm{H}^{m-1}(\conorm)\oplus \mathrm{H}^{m-1}(E)\rightarrow \mathrm{H}^{m-1}(E_0)
\]
is isomorphic to the Tate twist $\QQ(\frac{1-m}{2})$ which can be generated by one algebraic class in $\mathrm{H}^{d}(S)\otimes \mathrm{H}^{\frac{d}{2}}(F)\subset \mathrm{H}^{m-1}(E_0)$. 
\end{lemma}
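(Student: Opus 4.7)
The plan is to compute $\rho$ termwise along the Leray decomposition of $H^{m-1}(E_0)$ and identify precisely which one-dimensional piece fails to be in the image. Since $S$ is simply connected and $\pi\colon E_0\to S$ is a smooth quadric bundle, the Leray spectral sequence degenerates at $E_2$, producing the non-canonical isomorphism
\[
H^{m-1}(E_0,\QQ)\;\cong\;\bigoplus_{p+q=m-1} H^p(S,\QQ)\otimes H^q(F,\QQ).
\]
Only even $p$ and $q$ contribute, since $S$ is a rational homogeneous variety and $F$ is a smooth quadric. Because $d/2$ is even for every case $d=4,8,16$, the sole Künneth summand whose fiber factor is not cyclic of rank one is the middle piece $H^d(S)\otimes H^{d/2}(F)$, in which $H^{d/2}(F)=\QQ\alpha\oplus\QQ\beta$ is spanned by the two rulings, with the hyperplane restriction satisfying $h|_F^{d/4}=\alpha+\beta$.

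I would first analyse the image from $\conorm$. By Lemma~\ref{smooth-quadric-bundles}, $\conorm\to S$ is a $\PP^{d/2+1}$-bundle, so
\[
H^{m-1}(\conorm)\;=\;\bigoplus_{p+2k=m-1} H^p(S)\cdot h^k,
\]
and restriction to $E_0$ is controlled by $h\mapsto h|_F$. A direct termwise check shows that $(h|_F)^k$ generates $H^{2k}(F)$ whenever $2k\neq d/2$, while for $2k=d/2$ it restricts to $\alpha+\beta$. Consequently the image of $\rho|_{\conorm}$ exhausts every Künneth summand with $q\neq d/2$, and on the middle summand covers precisely the subspace $H^d(S)\otimes\QQ(\alpha+\beta)$; the cokernel of $\rho$ is therefore already a quotient of the antisymmetric subspace $H^d(S)\otimes\QQ(\alpha-\beta)$.

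Next I would invoke the cohomology computation for the mildly degenerate quadric bundle $\pi_E\colon E\to S$ developed in Section~\ref{coh-quadric-bundle} (the generalisation of Beauville's theorem). The generic fiber of $\pi_E$ is a smooth quadric of odd dimension $d/2+1$, and by Corollary~\ref{excep-div-quad-fib} the discriminant locus is the smooth divisor $V\subset S$, over which the fibers are quadric cones of corank one. The associated double cover $\tilde V\to V$ parametrising rulings on the degenerate fibers contributes, via a Gysin-type argument, extra classes in $H^{m-1}(E)$ whose restriction to $E_0$ lands in the antisymmetric direction $H^*(S)\otimes\QQ(\alpha-\beta)$. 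Together with pullback from $H^*(S)$ and the hyperplane powers already accounted for, these classes are to be shown to exhaust $H^d(S)\otimes\QQ(\alpha-\beta)$ modulo a single one-dimensional subspace.

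The surviving one-dimensional cokernel is then of pure Hodge type $(\tfrac{m-1}{2},\tfrac{m-1}{2})$---since $H^d(S)\subset H^{d/2,d/2}(S)$ and the rulings are $(d/4,d/4)$-algebraic---and is generated by an algebraic class in $H^d(S)\otimes H^{d/2}(F)$, hence isomorphic to the Tate twist $\QQ(\tfrac{1-m}{2})$, as required. The main obstacle is the third step: extracting from the description of $H^*(E)$ in Section~\ref{coh-quadric-bundle} the exact codimension-one image inside $H^d(S)\otimes\QQ(\alpha-\beta)$, and verifying that the residual class is indeed algebraic.
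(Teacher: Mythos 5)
Your outline follows the same broad strategy as the paper's proof---decompose $H^{m-1}(E_0)$ via the Leray spectral sequence of $\pi$, observe that only the middle summand $H^{d}(S)\otimes H^{d/2}(F)$ can contribute to the cokernel, kill its symmetric part $H^{d}(S)\otimes\QQ(\alpha+\beta)$ using $\conorm$, and control the rest using $E$---but two of your three steps have genuine problems. First, your computation of the image of $H^{m-1}(\conorm)$ rests on the claim that $\conorm\ra S$ is a $\PP^{d/2+1}$-bundle interacting with $E_0$ fiberwise via $h\mapsto h|_F$. Lemma \ref{smooth-quadric-bundles} exhibits $\conorm$ as a projective bundle over the \emph{dual} variety $Sec(S)^*\cong S$, whose fibers are the secant cones $\Sigma_p$; the induced fibration of $E_0$ over $Sec(S)^*$ has fibers the secant loci $Q_p=\Sigma_p\cap S$, which are not the fibers $F$ of $\pi\colon E_0\ra S$. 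So the decomposition that bundle structure gives you is taken with respect to the wrong fibration, and comparing it with the $\pi$-K\"unneth decomposition used in the statement is itself a nontrivial task. The conclusion you want (image on the middle summand equals $H^{d}(S)\otimes\QQ(\alpha+\beta)$) is correct, but the paper obtains it by the blow-up formula $H^{m-1}(\conorm)={j_{E_0}}_*H^{m-3}(E_0)+\epsilon^*H^{m-1}(X_0)$ together with $j_{E_0}^*{j_{E_0}}_*=\cup\, c_1(\OO(E_0)|_{E_0})$ and the fact that $\eta^{d/4}=\alpha+\beta$ in $R^{d/2}\pi_*\QQ$.

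Second, the step you flag as ``the main obstacle'' is the entire content of the lemma, and your sketch of it points in the wrong direction. Over $U=S\setminus V$ the restriction $R^{d/2}f_*\QQ\ra R^{d/2}\pi_*\QQ$ has image generated by $\alpha+\beta$, so the classes of $H^{m-1}(E)$ seen from $E|_U$ land in the \emph{symmetric} direction; this is precisely what yields the lower bound, i.e.\ that the residual class $\sigma\otimes\alpha$ is genuinely not hit. The part of the image of $E$ that reaches individual rulings comes instead from classes supported over the discriminant: the surjection $H^{m-3}(\YY)\twoheadrightarrow H^{m-3}(\mathcal{W})$ followed by the Gysin map identifies this contribution on the middle summand with $\operatorname{Im}\bigl(\cup[V]\colon H^{d-2}(S)\ra H^{d}(S)\bigr)\otimes H^{d/2}(F)$, and the upper bound (cokernel of dimension at most one) then requires the explicit Schubert-calculus verification, case by case for $\PP^2\times\PP^2$, $Gr(2,6)$ and $\mathbb{OP}^2$, that $\cup[V]$ has corank one on $H^{d}(S)$. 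Neither the identification of this image, nor the corank computation, nor the lower-bound argument appears in your proposal, so as written it does not establish that the cokernel is one-dimensional rather than zero- or two-dimensional.
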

\begin{proof}
The Severi variety $S$ is a homogeneous space. The cohomology groups of 
$S$ are generated by the Schubert-type classes. The fiber $F$ is 
a smooth quadric of even dimension $\frac{d}{2}$. \(F\) contains 
two disjoint families of maximal isotropic subspaces of dimension \(\frac{d}{4}\).
We denote by \(\lambda_1, \lambda_2\) the cohomology classes of two 
maximal isotropic subspaces in the two families, which form a basis of 
\(\mathrm{H}^{\frac{d}{2}}(F)\).
The goal is to show that the quotient of the map $\rho$ is generated by 
one class $\sigma\otimes \lambda_i$ where $\sigma$ represents any 
$\frac{d}{2}$-dimensional Schubert-type class of $S$ and $i=1$ or $2$.

We split the proof into several steps. Based on the decomposition
\begin{equation}\label{decomp-quadric-bundle}
\mathrm{H}^{m-1}(E_0, \QQ)\cong \bigoplus_{p+q=m-1} \mathrm{H}^p(S, R^q\pi_*\QQ)\cong \bigoplus_{p+q=m-1} \mathrm{H}^p(S)\otimes \mathrm{H}^q(F),
\end{equation}
the first step is to prove that for $q\neq \frac{d}{2}$ the direct summand $\mathrm{H}^p(S)\otimes \mathrm{H}^q(F)$ is contained in the image of $\rho$. The classes $\{\sigma\otimes \lambda_i\}$ form a basis of $\mathrm{H}^{d}(S)\otimes \mathrm{H}^{\frac{d}{2}}(F)$, where $\sigma$ is a Schubert-type class of $S$, and $\{\lambda_1, \lambda_2\}$ are the two generators of $\mathrm{H}^{\frac{d}{2}}(F)$. The second step shows that any two such classes are linearly dependent in the quotient \(\operatorname{Coker}(\rho)\).
The last step assures the class $\sigma\otimes \lambda_i$ is non-trivial in \(\operatorname{Coker}(\rho)\).

\textbf{Step 1.} Consider the blow up diagram
\[
\begin{tikzcd}[row sep=large, column sep=large]
E_0 \ar[r, hook, "j_{E_0}"] \ar[d, "\pi"'] & \conorm \ar[d, "\epsilon"]\\
S \ar[r, hook, "\iota_S"] & X_0.
\end{tikzcd}
\]
The map
\[
{j_{E_0}}_*+\epsilon^*: \mathrm{H}^{m-3}(E_0)\oplus \mathrm{H}^{m-1}(X_0)\rightarrow \mathrm{H}^{m-1}(\conorm)
\]
is surjective. Then the image of $j^*_{E_0}: \mathrm{H}^{m-1}(\conorm)\rightarrow \mathrm{H}^{m-1}(E_0)$ is the image of 
\[
j^*_{E_0}{j_{E_0}}_*+ \pi^*\iota_S^*: \mathrm{H}^{m-3}(E_0)\oplus \mathrm{H}^{m-1}(X_0)\rightarrow \mathrm{H}^{m-1}(E_0).
\]
The map $\pi^*\iota_S^*$ sends $\mathrm{H}^{m-1}(X_0)$ into the component $\mathrm{H}^{m-1}(S)\otimes \mathrm{H}^0(F)$. The composition $j^*_{E_0}{j_{E_0}}_*$ is the cup-product map
\begin{equation}
\label{eqs:cup-prod-quad-bdl}
\cup [E_0]|_{E_0}: \mathrm{H}^{m-3}(E_0)\rightarrow \mathrm{H}^{m-1}(E_0).
\end{equation}

The blow-up \(\overline{X}_0\) is defined as the projective cone of 
the normal cone \(C_{S/X_0}\). The canonical line bundle \(\mathcal{O}(1)\) 
on the projective cone \(\overline{X}_0\) is the ideal sheaf of 
the exceptional divisor \(E_0\); see~\cite[App. B.6.]{Fulton}.
Then we have 
\[
\mathcal{O}_{\pi}(-1)\cong \mathcal{O}_{\overline{X}_0}(E_0)|_{E_0}.
\]
Hence the first Chern class \(c_1(\mathcal{O}_{\pi}(1))\) equals
the class \(-[E_0]|_{E_0}\) in \(\mathrm{H}^2(E_0, \mathbb{Q})\).

The line bundle \(\mathcal{O}_{\pi}(1)\) restricts to the hyperplane 
section line bundle \(\mathcal{O}(1)\) on each fiber of \(\pi\colon E_0\to S\). 
Hence the cup-product by \(-[E_0]|_{E_0}\) gives rise to a global section
\(\eta \in \Gamma(S, R^2\pi_*\mathbb{Q})\), and also maps of local systems
\[
\cup \eta: R^{q-2}\pi_*\QQ\rightarrow R^q\pi_*\QQ.
\]
If \(q\) is even and \(q\neq \frac{d}{2}\), the local system 
$R^{q}\pi_*\QQ$ is of rank one, which is generated by the section \(\eta^i\); see~\cite[XII \S 3.]{SGA7-II}. The local system \(R^{\frac{d}{2}}\pi_*\mathbb{Q}\) 
is of rank two and is locally generated by sections \(\{s_{D_1}, s_{D_2}\}\), 
where \(D_1, D_2\) locally represent two families of 
the maximal isotropic subspaces in the quadric fibers of \(\pi\). 
Moreover, there is \(\eta^{\frac{d}{4}}=s_{D_1}+s_{D_2}\); 
see~\cite[XII Thm. 3.2]{SGA7-II}. 

Passing to the cohomology, the map 
\[
\cup \eta\colon \mathrm{H}^p(S, R^{q-2}\pi_*\mathbb{Q})\to \mathrm{H}^p(S, R^q\pi_*\mathbb{Q})
\] 
is surjective for even \(q\neq \frac{d}{2}\). For \(q=\frac{d}{2}\), the cokernel of 
\[
\mathrm{H}^d(S, R^{\frac{d}{2}-2}\pi_*\mathbb{Q})\to \mathrm{H}^d(S, R^{\frac{d}{2}}\pi_*\mathbb{Q})
\]
is generated by the classes \(\{\sigma\otimes \lambda_i \mid \sigma\in \mathrm{H}^d(S,\mathbb{Q})\}\).

\textbf{Step 2.} 
Now we deal with the map $\mathrm{H}^{m-1}(E)\rightarrow \mathrm{H}^{m-1}(E_0)$. 
Recall from Corollary~\ref{cor:smooth-quadric-bundles} that $V$ is 
the discriminant locus of the quadric fibration \(h\colon E\to S\). 
Let $Y:=h^{-1}(V)$ be the family of singular quadrics, 
and let $W:=Y\cap E_0$ be the intersection. Consider the Gysin map
\[
\mathrm{H}^{m-3}(W)\rightarrow \mathrm{H}^{m-1}(E_0).
\]
We claim that the image of $\mathrm{H}^{m-3}(W)$ is contained in the image of $\mathrm{H}^{m-1}(E)$ in $\mathrm{H}^{m-1}(E_0)$.

Let $e: V\hookrightarrow E$ be the closed embedding which assigns every $t\in V$ to the unique singular vertex $e(t)$ in the quadric cone $h^{-1}(t)$. Let $\epsilon: \hat{E}\rightarrow E$ be the blowing up along $e(V)$, and let $\hat{Y}$ be the proper transform of $Y$. Since $E_0$ and the center $e(V)$ are disjoint, $E_0$ can be embedded into the blowup $\hat{E}$. Then there is the following cartesian diagram
\[
\begin{tikzcd}
W \ar[r, hook] \ar[d, hook] & E_0 \ar[d, hook] \\
\hat{Y} \ar[r, hook] & \hat{E}
\end{tikzcd}
\]
which induces the commutative homomorphisms
\[
\begin{tikzcd}
\mathrm{H}^{m-3}(\hat{Y}) \ar[r] \ar[d] & \mathrm{H}^{m-1}(\hat{E}) \ar[d]\\
\mathrm{H}^{m-3}(W) \ar[r] & \mathrm{H}^{m-1}(E_0).
\end{tikzcd}
\]
Denote by $D$ the exceptional divisor in the blowup $\hat{E}$. 
We know that $E_0$ does not intersect $D$ in $\hat{E}$. Hence the cohomology class in $\mathrm{H}^{m-1}(\hat{E})$ supported on $D$ maps to zero in $\mathrm{H}^{m-1}(E_0)$. Therefore the images of $\mathrm{H}^{m-1}(\hat{E})$ and $\mathrm{H}^{m-1}(E)$ in $\mathrm{H}^{m-1}(E_0)$ are the same. 

Note that each fiber of the projection $\hat{Y}\rightarrow V$ is the blow-up of a quadric cone along the single vertex. The projection factors through a family $Q$ of smooth quadrics over $V$. Moreover, the subvariety $W\subset \hat{Y}$ is isomorphic to $Q$ via the projection $\hat{Y}\rightarrow Q$. It follows that the map $\mathrm{H}^{m-3}(\hat{Y})\rightarrow \mathrm{H}^{m-3}(W)$ is surjective. By the above diagram the image of $\mathrm{H}^{m-3}(W)$ in $\mathrm{H}^{m-1}(E_0)$ is contained in the image of $\mathrm{H}^{m-1}(\hat{E})$. Our claim thus follows.

The subvariety $W$ is a smooth family of quadrics over $V$. The Gysin map
\[
\mathrm{H}^{m-3}(W)\rightarrow \mathrm{H}^{m-1}(E_0)
\]
restricts to the following map
\[
{\iota_V}_*\otimes id: \mathrm{H}^{d-2}(V)\otimes \mathrm{H}^{\frac{d}{2}}(F)\rightarrow \mathrm{H}^{d}(S)\otimes \mathrm{H}^{\frac{d}{2}}(F)
\]
where ${\iota_V}_*$ is the Gysin map for the ample divisor $V$ of $S$. By the Lefschetz hyperplane theorem, the image of ${\iota_V}_*$ equals the image of the cup-product
\[
\cup[V]: \mathrm{H}^{d-2}(S)\rightarrow \mathrm{H}^{d}(S).
\]
Let us look into the image case by case.
\begin{itemize}
    \item $S:=\PP^2\times \PP^2$, the cohomology $\mathrm{H}^2(S)$ has two generators $H_1, H_2$ where $H_i$ is the hyperplane section on the $i$-th factor. The image of ${\iota_V}_*$ is generated by $\langle H_1(H_1+H_2), H_2(H_1+H_2) \rangle$. Therefore, for two distinct elements $\sigma, \sigma'\in \{H^2_1, H^2_2, H_1H_2\}$, the classes $\sigma\otimes \lambda_i$ and $\sigma'\otimes \lambda_i$ are linearly dependent in \(\operatorname{Coker}(\rho)\).
    \item $S:=\mathrm{Gr}(2,6)$, the cohomology $\mathrm{H}^6(S)$ is generated by the Schubert classes $\{\sigma_3, \sigma_{2,1}\}$. The class $[V]=3\sigma_1$, and the cohomology $\mathrm{H}^8(S)$ is generated by the Schubert classes $\{\sigma_4, \sigma_{3,1}, \sigma_{2,2}\}$. By Pieri's formula \cite[\S4.3]{3264EH} for the intersection products of Schubert classes, we have 
\[
\sigma_3\cdot \sigma_1=\sigma_4+\sigma_{3,1}, ~\sigma_{2,1}\cdot \sigma_1=\sigma_{3,1}+\sigma_{2,2}.
\]
Then we can see that for distinct $\sigma, \sigma'\in \{\sigma_4, \sigma_{3,1}, \sigma_{2,2}\}$ the classes $\sigma\otimes \lambda_i$ and $\sigma'\otimes \lambda_i$ are linearly dependent in \(\operatorname{Coker}(\rho)\).
   \item $S:=\mathbb{O}\PP^2$, we find $\dim \mathrm{H}^{14}(S)=2$ and $\dim \mathrm{H}^{16}(S)=3$. Moreover, the calculus on the Schubert classes of $S$ is the same as the case $\mathrm{Gr}(1,5)$. For the details we refer to \cite[\S3]{IM-Chow-OP2-05}.
\end{itemize}

\textbf{Step 3.} 
It remains to verify any class $\sigma\otimes \lambda_i$ is 
non-trivial in \(\operatorname{Coker}(\rho)\). Through Step $1$, 
we have seen that $\sigma\otimes \lambda_i$ is not contained in 
the image of $\mathrm{H}^{m-1}(\conorm)$. It is also not contained in 
the image of $\mathrm{H}^{m-1}(E)$. Otherwise, letting $U\coloneqq S\setminus V$ be 
the open complement of $V$; then there exists a class in 
$\mathrm{H}^{m-1}(E|_U)$ that restricts to $\sigma|_U\otimes \lambda_1\in \mathrm{H}^{m-1}(E_0|_U)$. However, it is impossible because $h|_U: E|_U\rightarrow U$ is a smooth family and the image of the restriction map $i^*: R^{\frac{d}{2}}{h|_U}_*\QQ\rightarrow R^{\frac{d}{2}}{\pi|_U}_*\QQ$ is generated by $\lambda_1+\lambda_2$.
\end{proof}

Example \ref{HS-CY-type} shows the numerical analogue of 
the Hodge structures of a smooth cubic \(7\)-fold and 
the associated Calabi-Yau threefold. Now we can say such an analogue is derived from
Theorem \ref{lim-mix-HS}. To be precise, we have 
\begin{corollary} 
Denote by $\OO(1)$ the hyperplane section line bundle of the Severi varieties 
\[
\PP^2\times \PP^2\subset \PP^8, ~\mathrm{Gr}(2, 6)\subset \PP^{14}, ~\mathbb{OP}^2\subset \PP^{26}.
\] 
Let $V\in |\OO(3)|$ be a smooth divisor. For $\PP^2\times \PP^2$, the Hodge numbers of $V$ are 
\[
h^{3,0}=1, h^{2,1}=\binom{9}{3}-1.
\]
For $\mathrm{Gr}(2, 6)$, the Hodge numbers of $V$ are
\[
h^{7,0}=0, ~h^{6,1}=1, ~h^{5,2}=\binom{15}{3}, ~h^{4,3}=\binom{15}{6}-1.
\]
For $\mathbb{OP}^2$, the Hodge numbers of $V$ are
\begin{align*}
&h^{15,0}=h^{14,1}=h^{13, 2}=0, \\
&h^{12,3}=1, ~h^{11,4}=\binom{27}{3}, ~h^{10,5}=\binom{27}{6}, ~h^{9,6}=\binom{27}{9}, ~h^{8,7}=\binom{27}{12}-1.
\end{align*}
\end{corollary}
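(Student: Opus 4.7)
The plan is to derive the Hodge numbers of $V$ by combining Theorem \ref{lim-mix-HS} with Griffiths' description of the Hodge numbers of a smooth cubic hypersurface via its Jacobian ring. The theorem identifies the weight-$m$ subquotient $\Gr^W_m H^m_{\lim}$ (up to the Tate twist that matches weights) with $H^{d-1}(V,\QQ)$, while the remaining graded pieces $\Gr^W_{m\pm 1}$ are each one-dimensional Hodge classes of type $((m\mp 1)/2,(m\mp 1)/2)$.

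First I would reduce to computing the middle cohomology $H^{d-1}(V)$. Each Severi variety $S$ is a rational homogeneous space whose cohomology is concentrated in even degree and spanned by algebraic $(p,p)$-classes. The Lefschetz hyperplane theorem for the smooth ample divisor $V\subset S$, together with Poincar\'e duality on $V$, then forces $h^{p,q}(V)=0$ for $p\neq q$ whenever $p+q\neq d-1$; the only nontrivial Hodge classes of $V$ live in $H^{d-1}(V)=H^{d-1}_{\mathrm{prim}}(V)$.

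Next I would use semicontinuity of Hodge-filtration dimensions: writing $h^{p,q}(H^m_{\lim})$ for the Hodge--Deligne numbers of the LMHS attached to the semistable family $\FX'\to\disc'$,
\[
h^{p,m-p}(X_t)\;=\;\dim\Gr^p_F H^m_{\lim,\CC}\;=\;\sum_q h^{p,q}(H^m_{\lim}).
\]
Theorem \ref{lim-mix-HS} splits the right-hand side into three pieces: the contribution of $\Gr^W_m$, which under the Tate-twist shift $k=(m-d+1)/2$ equals $h^{p-k,m-p-k}(V)$, and the contribution of $\Gr^W_{m\pm 1}$, which equals $1$ precisely when $p=(m\mp 1)/2$. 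Rearranging gives, for $a+b=d-1$,
\[
h^{a,b}(V)\;=\;h^{a+k,b+k}(X_t)\;-\;\varepsilon_{a+k},
\]
where $\varepsilon_p=1$ for $p\in\{(m-1)/2,(m+1)/2\}$ and vanishes otherwise.

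Finally I would insert the classical Griffiths formula: since $m$ is odd in all three cases, for a smooth cubic $m$-fold $X_t\subset\PP^{m+1}$ with Jacobian ring $R=\CC[x_0,\dots,x_{m+1}]/J$ one has $h^{p,m-p}(X_t)=\dim R_{3p+1-m}=\binom{m+2}{3p+1-m}$. Specializing to $(m,d,k)=(7,4,2),(13,8,3),(25,16,5)$ and applying the identity above reproduces each line of the claimed table; the ``$-1$'' terms in $h^{2,1}=\binom{9}{3}-1$, $h^{4,3}=\binom{15}{6}-1$ and $h^{8,7}=\binom{27}{12}-1$ are exactly the $\varepsilon$-corrections at the middle positions. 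The main obstacle is not the algebra but the conceptual book-keeping of the Tate-twist shift $k$ relating the weights of $H^{d-1}(V)$ and $\Gr^W_m H^m_{\lim}$, and the correct placement of the two extra Hodge classes contributed by $\Gr^W_{m\pm 1}$; the Segre case then recovers Example \ref{HS-CY-type}.
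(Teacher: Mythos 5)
Your argument is correct and coincides with the paper's own proof: both rest on the equality $\dim F^p=\dim F^p_\infty$ coming from \eqref{lim-hodge-flit}, the identification $\Gr^W_m H^m_{\lim}\cong H^{d-1}(V)$ from Theorem \ref{lim-mix-HS} (up to the Tate-twist shift $k=(m-d+1)/2$, which you usefully make explicit), and the subtraction of the two one-dimensional Tate classes in $\Gr^W_{m\pm1}$ sitting at the positions $p=(m\mp1)/2$. The only cosmetic difference is that you derive the cubic's Hodge numbers from the Jacobian ring rather than quoting them; since $\binom{m+2}{3p+1-m}=\binom{m+2}{2m+1-3p}$, your index convention agrees with the formula the paper cites.
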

\begin{proof}
Let $X\subset \PP^{m+1}$ be a smooth cubic $m$-fold that cuts out $V$. For an odd integer $m$, we have 
\[
h^{p,m-p}(X)=\binom{m+2}{2m+1-3p};
\] 
see \cite[Remark 1.17]{Huy-cubic}. 
By Theorem~\ref{lim-mix-HS}, we have the isomorphism
\(\mathrm{H}^{d-1}(V, \QQ)\cong \mathrm{Gr}^W_m \mathrm{H}^m_{\lim}\).
We shall determine the Hodge structure of \(\mathrm{Gr}^W_m \mathrm{H}^m_{\lim}\).
We denote by $h^{p,q}, 0\leq p,q\leq m$ the Hodge numbers of 
the limit mixed Hodge structure $\mathrm{H}^m_{\lim}$. 
By the definition of the limiting Hodge filtration~\eqref{eqs:lim-hodge-flit},
we have
\[
\dim F^p=\dim F^p_{\infty}.
\] 
where \(F^p\) is the Hodge filtration of \(\mathrm{H}^m(X, \CC)\).
It follows that
\[
\dim \mathrm{H}^{p,m-p}(X)=\dim F^p_\infty/F^{p+1}_\infty=\sum_{i=0}^m h^{p,i}.
\]
Recall that $\mathrm{Gr}^W_{m-1}$ (resp. $\mathrm{Gr}^W_{m+1}$) is isomorphic to 
the Tate twist $\QQ(\frac{1-m}{2})$ (resp. $\QQ(\frac{1+m}{2})$). Hence
\begin{enumerate}
    \item $\dim F^k_\infty/F^{k+1}_\infty=h^{k,m-k}$, $k\neq \frac{m-1}{2}, \frac{m+1}{2}$,
    \item $\dim F^{\frac{m-1}{2}}_\infty/F^{\frac{m+1}{2}}_\infty=h^{\frac{m-1}{2}, \frac{m-1}{2}}+h^{\frac{m-1}{2}, \frac{m+1}{2}}=h^{\frac{m-1}{2}, \frac{m+1}{2}}+1$,
    \item $\dim F^{\frac{m+1}{2}}_\infty/F^{\frac{m+3}{2}}_\infty=h^{\frac{m+1}{2}, \frac{m-1}{2}}+h^{\frac{m+1}{2}, \frac{m+1}{2}}=h^{\frac{m+1}{2}, \frac{m-1}{2}}+1$.
\end{enumerate}
Therefore we can obtain the Hodge numbers \(h^{p,q}\) for \(p+q=m\) 
in terms of \(h^{p,m-p}(X)\).
\end{proof}

\section{Cohomology of quadric fibrations}
\label{sec:coh-quadric-bundle}
The goal of this section is to study the cohomology of the quadric fibrations appearing in Corollary \ref{excep-div-quad-fib} and the main theorem. We can work on the quadric fibrations over a general base space in the following set-up.

Let $S$ be a simply connected smooth projective variety of even dimension $d$, and let $V$ be a smooth ample divisor of $S$. Suppose that 
\begin{itemize}
     \item $\mathrm{H}^*(S, \ZZ)$ are torsion-free, and $\mathrm{H}^k(S, \ZZ)=0$ for all odd $k$;
     \item $f: \XX\rightarrow S$ is a quadric fibration of relative dimension $2n-1$ whose discriminant divisor is $V$, and every singular fiber $\XX_t$ for $t\in V$ is a quadric cone with corank one.
\end{itemize}
Notice that the Severi varieties and the quadric fibrations we deal with satisfy these assumptions.

\begin{lemma}\label{cohomology-condition}
Let $(S, V)$ be the pair of a smooth projective variety and an ample divisor in the above set-up. Denote by $U=S-V$ the open complement. Then we have
\begin{enumerate}
    \item $\mathrm{H}^*(V, \ZZ)$ are torsion-free,
    \item $\mathrm{H}^k(U, \ZZ)=0$ for $k>d$ or $k$ is odd.
\end{enumerate}
\end{lemma}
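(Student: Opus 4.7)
The plan is to handle the two assertions separately, using Lefschetz-type theorems combined with duality. For (1), I will first apply the Lefschetz hyperplane theorem to the ample divisor $V \subset S$: the restriction $H^k(S, \ZZ) \ra H^k(V, \ZZ)$ is an isomorphism for $k < d-1$ and injective for $k = d-1$, so the hypothesis that $H^*(S, \ZZ)$ is torsion-free immediately transfers to $H^k(V, \ZZ)$ for $k \leq d-1$. For the remaining degrees $k \geq d$, I will combine Poincar\'e duality on the smooth projective $(d-1)$-fold $V$ with the universal coefficient identification $\mathrm{Tors}\, H^j(V, \ZZ) = \mathrm{Tors}\, H_{j-1}(V, \ZZ)$: chaining the two yields an isomorphism between the torsion subgroup of $H^k(V, \ZZ)$ and that of $H^{2d-1-k}(V, \ZZ)$, and since $2d-1-k \leq d-1$ in this range, the latter is already known to vanish.

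For (2), the vanishing in degrees $k > d$ is almost automatic: since $V$ is ample, $U = S \setminus V$ is an affine variety (a multiple of $V$ realises it as a hyperplane section in some projective embedding of $S$), so the Andreotti--Frankel theorem forces $U$ to have the homotopy type of a CW complex of real dimension at most $d$, and the vanishing is immediate.

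The remaining case is $k$ odd with $k \leq d$; since $d$ is even this actually forces $k \leq d-1$. I will exploit the Gysin long exact sequence
\[
H^{k-2}(V, \ZZ) \xrightarrow{\iota_*} H^k(S, \ZZ) \ra H^k(U, \ZZ) \ra H^{k-1}(V, \ZZ) \xrightarrow{\iota_*} H^{k+1}(S, \ZZ),
\]
where the odd-degree vanishing on $S$ identifies $H^k(U, \ZZ)$ with the kernel of $\iota_*: H^{k-1}(V, \ZZ) \ra H^{k+1}(S, \ZZ)$. The self-intersection formula $\iota^* \iota_* = \cup\, c_1(\OO_V(V))$, together with the hard Lefschetz isomorphism $c_1(\OO_V(V))^{d-k}: H^{k-1}(V, \QQ) \overset{\sim}{\ra} H^{2d-1-k}(V, \QQ)$ (which applies because $\OO_V(V)$ is ample on $V$ and $d-k \geq 1$), forces the single cup product $\cup\, c_1(\OO_V(V))$ to be injective on $H^{k-1}(V, \QQ)$, so $\iota_*$ is rationally injective. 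The main subtlety, which I expect to be the only real obstacle, is upgrading this to integral injectivity; but this is exactly where part (1) pays off, since $H^{k-1}(V, \ZZ)$ is then torsion-free and rational injectivity automatically implies integral injectivity.
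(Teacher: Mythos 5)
Your proof is correct and follows essentially the same route as the paper: Lefschetz plus universal coefficients (with Poincar\'e duality) for (1), affineness of $U$ and Morse theory for $k>d$, and the localization/Gysin sequence plus hard Lefschetz for odd $k$. The only difference is where hard Lefschetz is applied: the paper precomposes the Gysin map with the restriction isomorphism $H^{k-1}(S,\ZZ)\overset{\sim}{\to}H^{k-1}(V,\ZZ)$ and invokes hard Lefschetz on $S$, so integral injectivity comes from the torsion-freeness hypothesis on $H^*(S,\ZZ)$ rather than from part (1), whereas you postcompose with $\iota^*$ and work on $V$, making part (2) depend on part (1); both versions are valid.
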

\begin{proof}
The first assertion is deduced from the Lefschetz hyperplane theorem and the universal coefficient theorem for the cohomology of $V$.

The open subset $U$ is a smooth affine variety with complex dimension $d$. By Morse theory, $U$ is homotopic to a CW-complex of real dimension at most $d$. Then $\mathrm{H}^k(U, \ZZ)=0$ if $k>d$. 

To compute the cohomology of odd degree $k<d$, we consider the long exact sequence of pair
\[
\dots \rightarrow \mathrm{H}^k_V(S, \ZZ)\rightarrow \mathrm{H}^k(S, \ZZ)\rightarrow \mathrm{H}^k(U, \ZZ)\rightarrow \mathrm{H}^{k+1}_V(S, \ZZ)\rightarrow \dots. 
\]
By our assumption $\mathrm{H}^k(S, \ZZ)=0$ for odd $k$. Then for odd $k$ the group $\mathrm{H}^k(U, \ZZ)$ is the kernel of the Gysin map
\[
\mathrm{H}^{k+1}_V(S, \ZZ)\rightarrow \mathrm{H}^{k+1}(S, \ZZ).
\]
Through the Thom isomorphism $\mathrm{H}^{k+1}_V(S, \ZZ)\cong \mathrm{H}^{k-1}(V, \ZZ)$ and the isomorphism $\mathrm{H}^{k-1}(S, \ZZ)\cong \mathrm{H}^{k-1}(V, \ZZ)$ by the Lefschetz hyperplane theorem, we identify the Gysin map to the composition 
\[
\mathrm{H}^{k-1}(S, \ZZ)\overset{\sim}{\rightarrow} \mathrm{H}^{k-1}(V, \ZZ) \rightarrow \mathrm{H}^{k+1}(S, \ZZ)
\]
which is the cup-product map of the class $[V]$. Since $\mathrm{H}^*(S, \ZZ)$ is torsion-free, the hard Lefschetz theorem asserts the cup-product map is injective. Therefore $\mathrm{H}^k(U, \ZZ)=0$ for odd $k<d$.
\end{proof}

\begin{corollary}\label{vanish-smooth-quadric}
Let $f_U: \XX_U\rightarrow U$ be a smooth family of quadrics of odd dimension. Then the cohomology $\mathrm{H}^k(\XX_U, \ZZ)$ vanishes if $k$ is odd.
\end{corollary}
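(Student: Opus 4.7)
The plan is to run the Leray spectral sequence for the smooth family $f_U : \XX_U \to U$ and show that the $E_2$-page is already zero in total degrees of odd parity, by combining the (very restrictive) fiber cohomology with the vanishing from Lemma \ref{cohomology-condition}.

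First I would record the cohomology of a smooth quadric $Q$ of odd dimension $2n-1$: one has $H^q(Q,\ZZ)=\ZZ$ for every even $q$ with $0\le q\le 4n-2$, generated by the appropriate power of the hyperplane class, and $H^q(Q,\ZZ)=0$ otherwise. In particular $H^{\mathrm{odd}}(Q,\ZZ)=0$. Consequently the higher direct images $R^{2i+1}{f_U}_\ast \ZZ$ all vanish.

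Next I would argue that each surviving $R^{2i}{f_U}_\ast \ZZ$ is the constant sheaf $\ZZ_U$. Since the fibers are simply connected odd-dimensional smooth quadrics with $H^{2i}$ generated by a power of the hyperplane class, and since $f_U$ comes equipped with a global polarization (the restriction of the polarization on $\XX$), the fiberwise generator extends to a global section trivialising the rank-one local system. Hence monodromy is trivial on each $R^{2i}{f_U}_\ast \ZZ$.

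Plugging this into the Leray spectral sequence
\[
E_2^{p,q}=H^p\bigl(U,R^q{f_U}_\ast\ZZ\bigr)\Rightarrow H^{p+q}(\XX_U,\ZZ),
\]
the term $E_2^{p,q}$ vanishes unless $q$ is even, in which case $E_2^{p,q}\cong H^p(U,\ZZ)$. For $p+q$ odd one is therefore forced into $q$ even and $p$ odd, and Lemma \ref{cohomology-condition}(2) supplies $H^p(U,\ZZ)=0$. Thus $E_2^{p,q}=0$ whenever $p+q$ is odd, so every abutment $H^k(\XX_U,\ZZ)$ with $k$ odd vanishes. The only real step to be careful about is the triviality of the local systems $R^{2i}{f_U}_\ast\ZZ$, but this is immediate once one exhibits the global hyperplane-section class — there is no middle-dimensional ambiguity because the fiber dimension $2n-1$ is odd.
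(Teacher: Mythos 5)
Your proposal is correct and follows essentially the same route as the paper: the Leray spectral sequence for $f_U$, vanishing of $R^{q}{f_U}_*\ZZ$ for odd $q$, constancy of $R^{q}{f_U}_*\ZZ$ for even $q$, and Lemma \ref{cohomology-condition}(2) to kill $H^p(U,\ZZ)$ for odd $p$. Your explicit justification that the even-degree local systems are trivialized by powers of the global hyperplane class is a detail the paper leaves implicit, and it is a worthwhile one since $U=S\setminus V$ need not be simply connected.
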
 
\begin{proof}
Consider the Leray spectral sequence
\[
E^{p, q}_2:= \mathrm{H}^p(U, R^q{f_U}_*\ZZ)\Rightarrow \mathrm{H}^{p+q}(\XX_U, \ZZ).
\]
If $q$ is odd, the sheaf $R^q{f_U}_*\ZZ$ is zero. If $q$ is even, the sheaf $R^q{f_U}_*\ZZ$ is isomorphic to the constant sheaf $\ZZ$. It follows from the above Lemma that $\mathrm{H}^p(U, R^q{f_U}_*\ZZ)=0$ if $p+q$ is odd. Hence our assertion follows.
\end{proof}

For the quadric fibration $f: \XX\rightarrow S$, there is a closed embedding $e: V\hookrightarrow \XX$ such that for each $t\in V$ the image $e(t)$ is the unique singular point of the quadric cone $\XX_t:=f^{-1}(t)$. Let $\epsilon: \TXX\rightarrow \XX$ be the blowing up along the smooth subvariety $e(V)$. Let $\YY:=f^{-1}(V)$ be the family of singular quadrics over $V$, and let $\TYY$ be the proper transform of $\YY$ in $\TXX$. Consider the composition $h: \TYY\rightarrow \YY\rightarrow V$. The fiber $h^{-1}(t)$ for each $t\in V$ is the blow-up of $\XX_t$ along $e(t)$. Hence $h^{-1}(t)$ is a $\PP^1$-bundle over a $(2n-2)$-dimensional smooth quadric. It follows that $h$ factors through a smooth family $\FQQ$ of $(2n-2)$-dimensional quadrics over $V$. Let us write
\[
h=q\circ p: \TYY\overset{p}{\rightarrow} \FQQ\overset{q}{\rightarrow} V
\]
where $p: \TYY\rightarrow \FQQ$ is a $\PP^1$-bundle. We summarize the above operations by the following diagram
\begin{equation}\label{desing-quadric-fibration}
\begin{tikzcd}
               & \TYY \ar[ld, "p"] \ar[r, hook, "j"] \ar[d] & \TXX \ar[d, "\epsilon"] \\
\FQQ \ar[rd, "q"] & \YY \ar[r, hook] \ar[d]                    & \XX \ar[d, "f"]\\
               & V \ar[r, hook]                               & S
\end{tikzcd}
\end{equation}
Let $\pi_q: \mathcal{F}_{n-1}(\FQQ/V)\rightarrow V$ be the relative Hilbert scheme of $(n-1)$-planes contained in the fibers of $q$. For a smooth quadric of even dimension, the variety of the maximal linear subspaces contained in the quadric has two disjoint components. Hence every fiber of $\pi_q$ has two disjoint components. Consider the Stein factorization 
\[
\mathcal{F}_{n-1}(\FQQ/V)\rightarrow \tilde{V} \overset{\pi}{\rightarrow}V.
\]
The finite map $\pi: \tilde{V}\rightarrow V$ is an \'etale double covering. 
\begin{proposition}\label{sm-quad-phs} Keep the above notations. There is an isomorphism 
\[
\phi: \mathrm{H}^{d-1}(\tilde{V}, \ZZ)\overset{\sim}{\rightarrow} \mathrm{H}^{2n+d-3}(\FQQ, \ZZ)
\]
such that 
\begin{enumerate}
    \item $\phi$ is a morphism of Hodge structures of type $(n-1, n-1)$
    \[
    \phi(\mathrm{H}^{p,q}(\tilde{V}))=\mathrm{H}^{p+n-1, q+n-1}(\FQQ), ~\forall~ p+q=d-1;
    \]
    \item for any $a, b\in \mathrm{H}^{d-1}(\tilde{V}, \ZZ)$, we have
    \[
    \int_{\FQQ} \phi(a)\cup \phi(b)=
    \begin{cases}
    \langle a, b\rangle, & ~\text{if n is odd};\\
    \langle a, \iota^*b\rangle, & ~\text{if n is even}
    \end{cases}
    \]
    where $\iota^*$ is the natural involution on $\tilde{V}$, and $\langle~,~ \rangle$ is the intersection pairing on $\tilde{V}$.
\end{enumerate}
\end{proposition}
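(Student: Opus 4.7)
The strategy is to identify $H^{2n+d-3}(Q,\ZZ)$ with $H^{d-1}(\tilde{V},\ZZ)$ via the Leray spectral sequence of the smooth quadric fibration $q\colon Q\to V$, which degenerates at $E_2$ by Deligne's theorem and carries the structure of a spectral sequence of mixed Hodge structures. First I would analyze the local systems $R^iq_*\ZZ$: since the fibers are smooth quadrics of even dimension $2n-2$, the odd-degree direct images vanish; for even $i\neq 2n-2$ one has $R^iq_*\ZZ\cong\ZZ_V$, trivialized by a power of the hyperplane class; and the rank-two middle sheaf $R^{2n-2}q_*\ZZ$ is spanned fibrewise by the classes of the two families of maximal $(n-1)$-planes, with monodromy permuting them according to the \'etale double cover $\pi$, so that $R^{2n-2}q_*\ZZ\cong\pi_*\ZZ_{\tilde{V}}$.

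In total degree $2n+d-3$, which is odd because $d$ is even, any surviving $E_2^{p,i}$ requires $p$ odd and $i$ even. Arguing as in Lemma \ref{cohomology-condition}, the hypothesis $H^{\mathrm{odd}}(S,\ZZ)=0$ combined with the Lefschetz hyperplane theorem and Poincar\'e duality on $V$ forces $H^p(V,\ZZ)=0$ for every odd $p\neq d-1$, and the constraint $p+i=2n+d-3$ with $p=d-1$ pins $i$ down to $2n-2$. Consequently the only surviving $E_2$-term is
\[
E_2^{d-1,2n-2}=H^{d-1}(V,\pi_*\ZZ_{\tilde{V}})=H^{d-1}(\tilde{V},\ZZ),
\]
and I define $\phi$ as the inverse of this composite isomorphism. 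Statement (1) is then automatic: the variation of Hodge structure $R^{2n-2}q_*\QQ$ is pure of Hodge type $(n-1,n-1)$ since its local sections come from algebraic ruling classes, so $H^{d-1}(V,\pi_*\QQ)$ receives the Hodge filtration of $H^{d-1}(\tilde{V},\QQ)$ shifted by $(n-1,n-1)$.

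For the pairing in (2), the Poincar\'e pairing on $H^{2n+d-3}(Q)$ factors at the $E_2$-level through the cup product on $V$ composed with the fibrewise intersection pairing on local systems $\pi_*\ZZ\otimes\pi_*\ZZ\to\ZZ_V$, with values in $E_2^{2d-2,4n-4}=H^{2d-2}(V,\ZZ)=\ZZ$. On a smooth quadric of dimension $2n-2$ the two ruling classes $\alpha,\beta$ satisfy $\alpha^2=\beta^2=1,\,\alpha\beta=0$ when $n-1$ is even (i.e.\ $n$ odd), and $\alpha^2=\beta^2=0,\,\alpha\beta=1$ when $n-1$ is odd (i.e.\ $n$ even). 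The endomorphism ring of $\pi_*\ZZ$ is the group ring of the covering involution $\iota$; under the self-duality of $\pi_*\ZZ$, the diagonal pairing corresponds to $1\in\ZZ[\iota]$ and induces the ordinary cup product $\langle a,b\rangle$ on $\tilde{V}$, while the off-diagonal swap pairing corresponds to $\iota$ and induces $\langle a,\iota^*b\rangle$. The main obstacle I expect is tracking this correspondence precisely: trivializing $\pi$ over a small analytic open $U\subset V$ as $\pi^{-1}(U)=U_1\sqcup U_2$ and writing $\pi_*\ZZ|_U=\ZZ e_1\oplus\ZZ e_2$ makes this transparent, since an off-diagonal pairing $(e_i,e_j)=1-\delta_{ij}$ pairs the $U_1$-sheet with the $U_2$-sheet, which under the identification $H^{d-1}(\tilde{V},\ZZ)=H^{d-1}(V,\pi_*\ZZ)$ is exactly the action of $\iota^*$.
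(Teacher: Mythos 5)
Your proposal is correct and follows essentially the same route as the paper: the Leray spectral sequence of $q$, the identification $R^{2n-2}q_*\ZZ\cong\pi_*\ZZ$ via the Stein factorization, the vanishing of all other $E_2$-terms in total degree $2n+d-3$, and the ruling-class intersection relations together with a local trivialization of $\pi$ for the pairing. The one point the paper treats more carefully is that Deligne's degeneration theorem is a $\QQ$-coefficient statement, so to get the asserted isomorphism over $\ZZ$ one must also note that the relevant $E_2$-terms are torsion-free (using Lemma \ref{cohomology-condition}) before concluding that the integral differentials vanish.
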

\begin{proof}
We consider the Leray spectral sequence for the smooth family $q$
\begin{equation}\label{leray-ss-q}
E^{i,j}_2:=\mathrm{H}^i(V, R^j q_*\ZZ)\Rightarrow \mathrm{H}^{i+j}(\FQQ, \ZZ).
\end{equation}
The result \cite[EXP. XII, Thm. 3.3]{SGA7-II} asserts an isomorphism 
\begin{equation}\label{stein-fractor-iso}
u: \pi_*\ZZ \overset{\sim}{\rightarrow} R^{2n-2}q_*\ZZ
\end{equation}
which is obtained from the Stein factorization. Then $R^jq_*\ZZ$ is isomorphic to 
\[
\begin{cases}
0, & j \text{~odd~};\\
\ZZ, & j\neq 2n-2 \text{~even~};\\
\pi_*\ZZ, & j=2n-2.
\end{cases}
\]
Note that $d$ is even. Then $E^{d-1, 2n-2}_2$ is the only non-zero term among $\{E^{i,j}_2\}$ for $i+j=2n+d-3$, which follows from the Lefschetz hyperplane theorem for the pair $(S, V)$. We claim the Leray spectral sequence \eqref{leray-ss-q} degenerates at $E_2$. Therefore
\[
\mathrm{H}^{d-1}(\tilde{V}, \ZZ)\cong E^{d-1, 2n-2}_2=E^{d-1, 2n-2}_{\infty}=\mathrm{H}^{2n+d-3}(\FQQ, \ZZ).
\]
On one hand, by Deligne's degeneration theorem, the spectral sequence \eqref{leray-ss-q} with $\QQ$-coefficients degenerates at $E_2$, i.e., $d_r\otimes \QQ=0, r\geq 2$. On the other hand, it follows from Lemma \ref{cohomology-condition} that $E^{i, j}_2=\mathrm{H}^i(V, R^jq_*\ZZ)$ are torsion-free for $j\neq 2n-2$. For $j=2n-2$, we have $E^{i, 2n-2}_2=\mathrm{H}^i(\tilde{V}, \ZZ)$. Since $V$ is simply connected, the \'etale cover splis as $\tilde{V}=V\sqcup V$. Then the group $\mathrm{H}^i(\tilde{V}, \ZZ)$ is torsion-free as well. As a result, we obtain $d_r=0, r\geq 2$ and \eqref{leray-ss-q} degenerates at $E_2$.

The proof of the rest two properties is faithful to Beauville's strategy in \cite[Lem. 2.2]{Bea-quadric-prmy-77}. Since some details of Beauville's arguments are omitted in his paper we sketch the proof here for reader's convenience.

The first property is deduced from Borel's result on fiber bundles \cite[app. 2, Thm. 2.1]{Topo-method-95}. Let $F$ denote the fiber of $q$. Let $\mathbf{H}^j(F)$ be the holomorphic vector bundle $R^jq_*\mathbb{C}\otimes_{\mathbb{C}} \OO_V$ on $V$, and let $\mathbf{H}^{r, s}_{\overline{\partial}}(F), r+s=j$ be the canonical Hodge subbundles. The complexified Leray spectral sequence of \eqref{leray-ss-q} admits a canonical grading
\[
\sum_{p+q=i+j, p,q\geq 0} {^{p, q}E^{i, j}_r}= E^{i, j}_r
\] 
of type $(p,q)$ with the differential $d_r: {^{p, q}E^{i, j}_r}\rightarrow {^{p, q+1}E^{i+r, j-r+1}_r}$. Moreover, the grading of type $(p, q)$ converges to the Hodge component $\mathrm{H}^{p, q}(\FQQ)$. On the $E_2$-page, the grading $^{p,q}E^{i, j}_2$ has the K\"unneth decomposition 
\[
^{p,q}E^{i, j}_2\cong \sum_{t\geq 0} \mathrm{H}^{t, i-t}_{\overline{\partial}}(V, \mathbf{H}^{p-t, q-i+t}_{\overline{\partial}}(F)).
\]
In our case, the isomorphism \eqref{stein-fractor-iso} shifts the weights of Hodge structures by $2n-2$. Then one can verify that 
\[
\phi(\mathrm{H}^{p,q}(\tilde{V}))=^{p+n-1, q+n-1}E^{d-1, 2n-2}_2=\mathrm{H}^{p+n-1, q+n-1}(\FQQ).
\]

Now let us prove the second property. Under the isomorphism 
\[
\mathrm{H}^{d-1}(V, R^{2n-2}q_*\ZZ)\rightarrow \mathrm{H}^{2n+d-3}(\FQQ, \ZZ),
\]
the intersection form on $\mathrm{H}^{2n+d-3}(\FQQ, \ZZ)$ corresponds to the cup-product 
\[
\cup: R^{2n-2}q_*\ZZ\otimes R^{2n-2}q_*\ZZ\rightarrow R^{4n-4}q_*\ZZ\xrightarrow{\mathrm{Tr}} \ZZ.
\]
The intersection form on $\mathrm{H}^{d-1}(\tilde{V}, \ZZ)$ corresponds to the cup-product
\[
\langle~,~\rangle: \pi_*\ZZ\otimes \pi_*\ZZ\rightarrow \pi_*\ZZ\xrightarrow{\mathrm{Tr}} \ZZ.
\]
The relation between the two cup-products $\cup$ and $\langle~,~\rangle$ under the isomorphism \eqref{stein-fractor-iso} can be illustrated as follows (cf. \cite[EXP. XII, Thm. 3.3 (iii)]{SGA7-II}). 

Recall the variety of $(n-1)$-planes in the smooth quadric $F$ has two connected components. Any two $(n-1)$-planes $\Lambda, \Lambda'\subset F$ are in the same component if and only if 
\begin{equation}
\dim \Lambda\cap \Lambda'\equiv(n-1) \text{~mod~} 2.
\end{equation}
Let $\Lambda_1, \Lambda_2\subset F$ be two maximal linear subspaces in different components representing the generators of $\mathrm{H}^{2n-2}(F)$. The above dimension congruence concludes the intersection relation
\[
\begin{cases}
\Lambda_1^2=\Lambda_2^2=1, ~\Lambda_1\cdot \Lambda_2=0, & n \text{~odd};\\
\Lambda_1^2=\Lambda_2^2=0, ~\Lambda_1\cdot \Lambda_2=1, & n \text{~even}.
\end{cases}
\]
Let $U$ be an \'etale local chart of $V$ such that $U\times_V \tilde{V}=U\sqcup U$. Write $a, b\in \mathrm{H}^{d-1}(U, \pi_*\ZZ)$ by
\[
a=\alpha_1+\alpha_2, b=\beta_1+\beta_2
\]
where $\alpha_i, \beta_i$ are classes supported on the different pieces of $U\times_V \tilde{V}$. Locally we have
\[
u(\alpha_i)=\alpha_i\otimes \Lambda_i, u(\beta_i)=\beta_i\otimes \Lambda_i\in \mathrm{H}^{d-1}(U, R^{2n-2}q_*\ZZ).
\] 
It follows from the intersection relation on $\Lambda_1$ and $\Lambda_2$ that
\[
u(a)\cup u(b)=
\begin{cases}
\alpha_1\cup \beta_1+\alpha_2\cup \beta_2, & n \text{~odd};\\
\alpha_1\cup \beta_2+\alpha_2\cup \beta_1, & n \text{~even}.
\end{cases}
\] 
Note that the involution $\iota^*$ on $\tilde{V}$ exchanges two pieces of $U\times_V \tilde{V}$. Therefore 
\[
u(a)\cup u(b)=
\begin{cases}
    \langle a, b\rangle, & ~\text{if n is odd};\\
    \langle a, \iota^*b\rangle, & ~\text{if n is even}.
    \end{cases}
\]
\end{proof}
The above proposition and the diagram \eqref{desing-quadric-fibration} defines a homomorphism of Hodge structure
\[
\tilde{\psi}: \mathrm{H}^{d-1}(\tilde{V}, \ZZ)\rightarrow \mathrm{H}^{2n+d-1}(\XX,\ZZ)
\]
via the composition
\begin{equation}\label{main-composition}
\mathrm{H}^{d-1}(\tilde{V})\overset{\phi}{\rightarrow} \mathrm{H}^{2n+d-3}(\FQQ)\overset{p^*}{\rightarrow} \mathrm{H}^{2n+d-3}(\TYY)\overset{j_*}{\rightarrow} \mathrm{H}^{2n+d-1}(\TXX)\overset{\epsilon_*}{\rightarrow} \mathrm{H}^{2n+d-1}(\XX).    
\end{equation}

\begin{proposition}
\label{prop:polarized-HS}

\begin{enumerate}
    \item The homomorphism $\tilde{\psi}: \mathrm{H}^{d-1}(\tilde{V}, \ZZ)\rightarrow \mathrm{H}^{2n+d-1}(\XX, \ZZ)$ is surjective.
    \item The intersection form $\langle~,~\rangle$ on $\mathrm{H}^{d-1}(\tilde{V}, \ZZ)$ satisfies
\[
\int_{\XX} \tilde{\psi}(a)\cup \tilde{\psi}(b)=(-1)^n \langle a, b-\iota^*b \rangle, \forall a,b\in \mathrm{H}^{d-1}(\tilde{V}, \ZZ)
\]
where $\iota^*$ is the natural involution on $\mathrm{H}^{d-1}(\tilde{V}, \ZZ)$.
\end{enumerate}
\end{proposition}

\begin{proof}
Let $U=S-V$, let $\XX_U:=f^{-1}(U)$ be the family of smooth quadrics. Consider the localization long exact sequence 
\[
\rightarrow \mathrm{H}^{2n+d-1}_{\YY}(\XX,\ZZ)\rightarrow \mathrm{H}^{2n+d-1}(\XX, \ZZ) \rightarrow \mathrm{H}^{2n+d-1}(\XX_U, \ZZ)\rightarrow .
\]
By Lemma \ref{vanish-smooth-quadric} the cohomology $\mathrm{H}^{2n+d-1}(\XX_U, \ZZ)$ vanishes. Hence the Gysin map 
\[
\mathrm{H}^{2n+d-3}(\YY, \ZZ)\cong \mathrm{H}^{2n+d-1}_{\YY}(\XX,\ZZ)\rightarrow \mathrm{H}^{2n+d-1}(\XX, \ZZ)
\] 
is surjective, which implies the composition 
\[
\epsilon_*j_*: \mathrm{H}^{2n+d-3}(\TYY, \ZZ)\rightarrow \mathrm{H}^{2n+d-1}(\XX, \ZZ)
\]
is surjective. 

Let $\EE$ be the exceptional divisor in the blow-up $\TXX$. Then $\EE\cap \TYY$ is the exceptional divisor of the proper transform $\TYY$ of $\YY$. As shown before, every fiber of $\YY\rightarrow V$ is a quadric cone of dimension $2n-1$ with a single vertex, and the proper transform $\TYY$ is the blowing up along the locus of the vertices. Therefore $\EE\cap \TYY$ is a family of smooth quadric of dimension $2n-2$ over $V$, which is isomorphic to $\FQQ$ via the projection $p: \TYY\rightarrow \FQQ$. The isomorphism gives a section $k: \FQQ\hookrightarrow \YY$ of $p$ fitting into the following commutative diagram
\begin{equation}\label{transver-intersect}
    \begin{tikzcd}
    \FQQ\ar[r, hook, "l"] \ar[d, hook, "k"]& \EE \ar[d, hook, "i"] \\
    \TYY \ar[r, hook, "j"] & \TXX. 
    \end{tikzcd}
\end{equation}
The canonical line bundle $\OO_p(1)$ for the $\PP^1$-bundle $p: \TYY\rightarrow Q$ is isomorphic to $\OO(\EE)|_{\TYY}$. Then the above diagram implies the decomposition 
\[
\mathrm{H}^{2n+d-3}(\TYY, \ZZ)\cong p^*\mathrm{H}^{2n+d-3}(\FQQ, \ZZ)\oplus k_*\mathrm{H}^{2n+d-5}(Q, \ZZ).
\]
We claim the direct summand $k_*\mathrm{H}^{2n+d-5}(\FQQ, \ZZ)$ is annihilated by the map $\epsilon_*j_*$. Then the map 
\[
\epsilon_*j_*p^*: \mathrm{H}^{2n+d-3}(\FQQ, \ZZ) \rightarrow \mathrm{H}^{2n+d-1}(\XX, \ZZ)
\]
will be surjective. As a consequence, the morphism $\psi: \mathrm{H}^{d-1}(\tilde{V}, \ZZ)\rightarrow \mathrm{H}^{2n+d-1}(\XX, \ZZ)$ is surjective. By the diagram \eqref{transver-intersect} we have $\epsilon_*j_*k_*=\epsilon_*i_*l_*$. The map $\epsilon_*i_*l_*$ factors through 
\[
\epsilon_*i_*: \mathrm{H}^{2n+d-3}(\EE, \ZZ)\rightarrow \mathrm{H}^{2n+d-1}(\XX, \ZZ).
\] 
Denote by $h\in \mathrm{H}^2(\EE)$ the divisor class $c_1(\OO_p(1))$, and $\pi: \EE\rightarrow V$ the projective normal bundle. The cohomology of $\EE$ has the decomposition
\[
\mathrm{H}^{2n+d-3}(\EE, \ZZ)\cong \bigoplus_{i=0}^{2n-1} \pi^*\mathrm{H}^{2n+d-3-2i}(V, \ZZ)\cdot [h^{i}].
\]
Note that $\pi_*h^i\neq 0 ~\text{unless}~ i=2n-1$, and $\mathrm{H}^{d-1-2n}(V, \ZZ)=0$ by Lemma \ref{cohomology-condition}. Hence the map $\epsilon_*i_*$ is zero. Our claim holds.

Let $x,y\in \mathrm{H}^{2n+d-3}(\FQQ, \ZZ)$. By the projection formula we have
\[
\langle \epsilon_*j_*p^*x, \epsilon_*j_*p^*y \rangle=\langle j_*p^*x, \epsilon^*\epsilon_*j_*p^*y \rangle.
\]
Denote by $i: \EE\hookrightarrow \TXX$ the inclusion, $N$ the normal bundle of the smooth subvariety $e(V)$ in $\TXX$, and $\pi: \EE\cong \PP(N)\rightarrow V$ the projective normal bundle. By Fulton's key formula \cite[Prop. 6.7]{Fulton}, the cohomology class $\epsilon^*\epsilon_*j_*p^*y$ can be expressed as
\[
\epsilon^*\epsilon_*j_*p^*y=j_*p^*y+i_*(\sum_{r=0}^{2n-2}h^r\pi^*\pi_*(\gamma_{2n-2-r}\cdot i^*j_*p^*y)).
\]
where 
\[
\gamma_s=h^{s}+h^{s-1}\cdot \pi^*c_1(N)+\ldots+ \pi^*c_s(N).
\] 
is a codimension $s$ algebraic cycle in $A^{s}(\EE)$. For the first term $j_*p^*y$ we have
\[
\langle j_*p^*x, j_*p^*y\rangle=p^*x\cdot p^*y\cdot [\TYY]|_{\TYY}.
\]
The divisor class $[\TYY]$ of the proper transform $\TYY$ in $\Pic(\TXX)$ is equal to
\[
\epsilon^* f^*[V]-2[\EE].
\]
It implies $\langle j_*p^*x, j_*p^*y\rangle$ is equal to
\[
(p^*(x\cdot y)\cdot (j^*\epsilon^* f^*[V]-2k_*1))=p^*(x\cdot y\cdot q^*[V]|_V)-2(x\cdot y)=-2(x\cdot y).
\]
Now we deal with the second term. Let us set 
\begin{align*}
P_r:&=\langle j_*p^*x, i_*(h^r\cdot \pi^*\pi_*(\gamma_{2n-2-r}\cdot i^*j_*p^*y))\rangle\\
    &=i^*j_*p^*x\cdot h^r\cdot \pi^*\pi_*(\gamma_{2n-2-r}\cdot i^*j_*p^*y).
\end{align*}
The cartesian diagram \eqref{transver-intersect} deduces that $i^*j_*p^*=l_*k^*p^*=l_*$. It follows that
\begin{align*}
P_r&=l_*x\cdot h^r\cdot \pi^*\pi_*(\gamma_{2n-2-r}\cdot l_*y)\\
   &=x\cdot l^*h^r\cdot q^*q_*(l^*\gamma_{2n-2-r}\cdot y).  
\end{align*}
The degree of the cohomology class $q_*(l^*\gamma_{2n-2-r}\cdot y)$ is 
\[
2(2n-2-r)+2n+d-3-2(2n-2)=2n+d-3-2r.
\] 
Note that the number $2n+d-3-2r$ is odd. The Lefschetz hyperplane theorem asserts $\mathrm{H}^{2n+d-1-2r}(V, \ZZ)=0$ unless $2n+d-3-2r=d-1$. Hence $P_r$ is possibly non-zero if and only if $r=n-1$. Recall 
\[
\gamma_{n-1}=h^{n-1}+h^{n-2}\cdot \pi^*c_1(N)+\ldots+\pi^*c_{n-1}(N).
\] 
Then $q_*(l^*\gamma_{n-1}\cdot y)=q_*(l^*h^{n-1}\cdot y)$. Let $a, b\in \mathrm{H}^{d-1}(\tilde{V}, \ZZ)$ such that $\phi(a)=x, \phi(b)=y$. Through the above discussion we have
\[
\int_{\XX}\tilde{\psi}(a)\cup \tilde{\psi}(b)=-2(\phi(a), \phi(b))+(\phi(a), l^*h^{n-1}\cdot q^*q_*(l^*h^{n-1}\cdot \phi(b))) 
\]
where $(~,~)$ is the intersection pairing on $\FQQ$. Combining the results of Proposition \ref{sm-quad-phs} $(2)$ and Lemma \ref{involu-formula} we obtain
\[
\int_{\XX}\tilde{\psi}(a)\cup \tilde{\psi}(b)=
\begin{cases}
\langle a, -b+\iota^*b \rangle, & n \text{~odd};\\
\langle a, b-\iota^*b \rangle, & n \text{~even}.
\end{cases}
\]
Therefore the proposition follows.
\end{proof}

\begin{lemma}\label{involu-formula}
Set $l^*h=\eta\in \mathrm{H}^2(\FQQ)$. For all $b\in \mathrm{H}^{d-1}(\tilde{V})$, we have
\[
\phi(b+\iota^*b)=\eta^{n-1}\cdot q^*q_*(\eta^{n-1}\cdot \phi(b))
\]
\end{lemma}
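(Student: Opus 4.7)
The plan is to compute both sides in the Leray decomposition of $H^*(Q)$ established in the proof of Proposition~\ref{sm-quad-phs}. Since the Leray spectral sequence for $q\colon Q\to V$ degenerates at $E_2$ with torsion-free terms, $V$ is simply connected, and $\tilde V = V\sqcup V$ is a trivial double cover, there is a (non-canonical) splitting
\[
H^k(Q, \ZZ) \;\cong\; \bigoplus_{p+q=k} H^p(V) \otimes H^q(F)
\]
compatible with the Leray filtration, where $F$ denotes a fiber of $q$. Writing $\Lambda_1, \Lambda_2 \in H^{2n-2}(F)$ for the two classes of $(n-1)$-planes and $b=(b_1,b_2) \in H^{d-1}(V)^{\oplus 2} = H^{d-1}(\tilde V)$, the isomorphism $\phi$ is identified with $b \mapsto b_1 \otimes \Lambda_1 + b_2 \otimes \Lambda_2$. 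Because the involution $\iota^*$ swaps the two factors, $\phi(b+\iota^* b) = (b_1+b_2)(\Lambda_1 + \Lambda_2)$, which is what the right-hand side should match.

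The fiberwise input is that $\eta^{n-1}|_F = \Lambda_1 + \Lambda_2$ in $H^{2n-2}(F)$; this follows by computing the intersection numbers $\int_F \Lambda_i \cdot \eta^{n-1} = 1$ and inverting, using the relations among $\Lambda_i \cdot \Lambda_j$ recalled in the proof of Proposition~\ref{sm-quad-phs}(2). Those same relations give, uniformly for both parities of $n$, $(\Lambda_1+\Lambda_2)\cdot \Lambda_i = [\mathrm{pt}]$ in $H^{4n-4}(F)$. By multiplicativity of the spectral sequence,
\[
\eta^{n-1} \cup \phi(b) \;=\; (b_1+b_2) \otimes [\mathrm{pt}] \;\in\; H^{d-1}(V) \otimes H^{4n-4}(F),
\]
so $q_*\bigl(\eta^{n-1} \cup \phi(b)\bigr) = b_1 + b_2 \in H^{d-1}(V)$. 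Pulling back places $b_1+b_2$ in $E_\infty^{d-1, 0} \subset H^{d-1}(Q)$, and a second cup with $\eta^{n-1}$ (only the fiberwise piece $\Lambda_1+\Lambda_2$ contributes) yields $(b_1+b_2)(\Lambda_1 + \Lambda_2) = \phi(b + \iota^* b)$, as required.

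The main delicate point is the Leray bookkeeping — ruling out contributions from pieces of $\eta^{n-1}$ outside its top fiberwise term $E_\infty^{0, 2n-2}$, both in $\eta^{n-1} \cup \phi(b)$ and in $\eta^{n-1} \cup q^*(b_1+b_2)$. This is automatic from a parity-dimension count: by the Lefschetz hyperplane theorem and the hypothesis that $H^{\mathrm{odd}}(S,\ZZ)=0$, one has $H^p(V,\ZZ)=0$ for odd $p \neq d-1$. Consequently, in each of the relevant odd total degrees $d-1$, $2n+d-3$, and $4n+d-5$, there is a unique nonzero $E_\infty^{p,q}$-piece (with $p = d-1$), which isolates the unique component of $\eta^{n-1}$ that can contribute. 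So no genuine obstacle remains beyond careful bookkeeping.
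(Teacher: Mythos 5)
Your proposal is correct, and it is worth noting that the paper does not actually prove this lemma: its entire ``proof'' is a citation to Beauville's Lemma 2.4, which is stated for quadric bundles over a surface, so strictly speaking the general case used here (with $V$ of dimension $d-1=3,7,15$) needs exactly the kind of verification you supply. Your argument is sound: in each of the three relevant odd total degrees $2n+d-3$, $4n+d-5$, and $d-1$, the Leray spectral sequence of $q$ has a unique nonvanishing graded piece, namely $E_\infty^{d-1,\,2n-2}$, $E_\infty^{d-1,\,4n-4}$, $E_\infty^{d-1,\,0}$, because $R^jq_*\ZZ$ vanishes for $j$ odd and is a sum of constant sheaves otherwise, while $H^{p}(V,\ZZ)=0$ for odd $p\neq d-1$ by Lefschetz, Poincar\'e duality and the torsion-freeness from Lemma \ref{cohomology-condition}. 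This makes the a priori non-canonical splitting canonical in those degrees and isolates the top fiberwise component $\Lambda_1+\Lambda_2$ of $\eta^{n-1}$ as the only one that can contribute to either cup product, by multiplicativity of the Leray filtration. The fiberwise identities $\eta^{n-1}|_F=\Lambda_1+\Lambda_2$ and $(\Lambda_1+\Lambda_2)\cdot\Lambda_i=[\mathrm{pt}]$ hold for both parities of $n$ (as one checks from the intersection relations recalled in Proposition \ref{sm-quad-phs}), so both sides equal $(b_1+b_2)\otimes(\Lambda_1+\Lambda_2)$. In short, your computation is a correct and self-contained replacement for the external citation, and it has the added benefit of making explicit where the hypotheses on $S$ and $V$ enter.
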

\begin{proof}
See \cite[Lem. 2,4]{Bea-quadric-prmy-77}.
\end{proof}

\begin{corollary}\label{mid-coh-quadric-fib}
The surjective map $\tilde{\psi}$ induces an isomorphism
\[
\psi: \mathrm{H}^{d-1}(V, \ZZ)\overset{\sim}{\rightarrow} \mathrm{H}^{2n+d-1}(\XX, \ZZ)
\]
of polarized Hodge structures up to a sign. To be precise, let $(~,~)$ denote the intersection form on $\mathrm{H}^{d-1}(V)$ and $\mathrm{H}^{2n+d-1}(\XX)$ respectively. For any $x,y\in \mathrm{H}^{d-1}(V, \ZZ)$, we have
\[
(\psi(x), \psi(y))=(-1)^n(x, y). 
\]
\end{corollary}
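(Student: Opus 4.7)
The plan is to identify $\ker(\tilde\psi)$ with $\pi^* H^{d-1}(V,\ZZ)$, use the simple-connectivity of $V$ to split the \'etale double cover, and then define $\psi$ as the composition of $\tilde\psi$ with the pushforward along one component inclusion.

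I would first determine $\ker(\tilde\psi)\otimes\QQ$. Since $\iota$ is a biholomorphism of $\tilde V$, the intersection form $\langle\cdot,\cdot\rangle$ on $H^{d-1}(\tilde V)$ is $\iota^*$-invariant, so the $(\pm 1)$-eigenspaces $H^{d-1}(\tilde V,\QQ)^\pm$ are mutually orthogonal and each carries a non-degenerate restriction. For $a\in H^{d-1}(\tilde V,\QQ)^+=\pi^* H^{d-1}(V,\QQ)$ and arbitrary $b=b^++b^-$, one has $b-\iota^* b=2b^-$, hence $\langle a,b-\iota^* b\rangle=0$; Proposition \ref{compatible-intersection-from} combined with Poincar\'e duality on $H^{2n+d-1}(\XX)$ then forces $\tilde\psi(a)=0$. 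The same formula, together with non-degeneracy of $\langle\cdot,\cdot\rangle$ on $H^{d-1}(\tilde V,\QQ)^-$, makes $\tilde\psi$ injective on the $(-1)$-eigenspace. Therefore $\ker(\tilde\psi)\otimes\QQ=\pi^*H^{d-1}(V,\QQ)$.

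Next I exploit the simple-connectivity of $V$: because $S$ is simply connected and $V\subset S$ is a smooth ample divisor of dimension $\geq 2$, Lefschetz hyperplane gives $\pi_1(V)=1$, so $H^1(V,\ZZ/2)=0$ and the \'etale double cover splits as $\tilde V\cong V\sqcup V$. Then $H^{d-1}(\tilde V,\ZZ)\cong H^{d-1}(V,\ZZ)^{\oplus 2}$, with $\iota^*$ swapping the summands and $\pi^*$ embedding $H^{d-1}(V,\ZZ)$ as the (integrally saturated) diagonal. Fixing a component inclusion $i_1:V\hookrightarrow\tilde V$, the subgroup $i_{1*}H^{d-1}(V,\ZZ)$ is an integral complement of $\pi^*H^{d-1}(V,\ZZ)$. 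Setting $\psi:=\tilde\psi\circ i_{1*}$, surjectivity of $\tilde\psi$ from Proposition \ref{compatible-intersection-from} and the direct-sum decomposition make $\psi$ an integral isomorphism onto $H^{2n+d-1}(\XX,\ZZ)$.

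The intersection identity follows by direct substitution: since $\iota\circ i_1=i_2$ has image disjoint from $i_1$, Proposition \ref{compatible-intersection-from} gives
\[
(\psi(x),\psi(y))_\XX=(-1)^n\,\langle i_{1*}x,\,i_{1*}y-i_{2*}y\rangle_{\tilde V}=(-1)^n(x,y)_V.
\]
For the Hodge-theoretic part, $\tilde\psi=\epsilon_*\circ j_*\circ p^*\circ\phi$ is a composition of morphisms of Hodge structure of respective types $(0,0)$, $(1,1)$, $(0,0)$, $(n-1,n-1)$ (by Proposition \ref{sm-quad-phs} and the standard functoriality of pullback, Gysin and proper pushforward), and hence of total type $(n,n)$; the splitting transports this to $\psi$. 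The main subtlety is keeping the argument on the integral lattice rather than only over $\QQ$; the splitting $\tilde V=V\sqcup V$ together with the torsion-freeness of $H^*(V,\ZZ)$ supplied by Lemma \ref{cohomology-condition} is precisely what makes this automatic.
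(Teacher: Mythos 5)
Your proposal is correct and follows essentially the same route as the paper: both identify the kernel of $\tilde{\psi}$ with the $\iota^*$-invariant (diagonal) part via Proposition \ref{compatible-intersection-from}(2), use the splitting $\tilde{V}\cong V\sqcup V$ coming from simple connectivity of $V$ to realize $\psi$ as $\tilde{\psi}$ applied to classes supported on one component, and verify the pairing by the same direct substitution $a=(x,0)$, $b=(y,0)$. Your only addition is to spell out, via Poincar\'e duality and the orthogonality of the $\iota^*$-eigenspaces, why the kernel is exactly $\operatorname{Ker}(1-\iota^*)$ and why the integral statement follows, steps the paper leaves implicit.
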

\begin{proof}
The assertion $(2)$ of Proposition \ref{prop:polarized-HS} implies the following exact sequence
\[
0\rightarrow \text{Ker}(1-\iota^*) \rightarrow \mathrm{H}^{d-1}(\tilde{V}, \ZZ)\rightarrow \mathrm{H}^{2n+d-1}(\XX, \ZZ)\rightarrow 0.
\]
Recall that $V$ is simply connected. Then the \'etale double cover $\tilde{V}$ is the disjoint union $V\sqcup V$, and the involution $\iota^*$ on $\tilde{V}$ exchanges two disjoint pieces. The quotient group $\mathrm{H}^{d-1}(\tilde{V}, \ZZ)/\langle\iota^*\rangle$ is isomorphic to $\mathrm{H}^{d-1}(V, \ZZ)$ via the map
\[
\mathrm{H}^{d-1}(\tilde{V}, \ZZ)/\langle\iota^*\rangle\rightarrow \mathrm{H}^{d-1}(V, \ZZ), ~(z_1, z_2)\mapsto z_1-z_2.
\]
Therefore it induces an isomorphism $\psi: \mathrm{H}^{d-1}(V, \ZZ)\rightarrow \mathrm{H}^{2n+d-1}(\XX, \ZZ)$. 

Let $x, y\in \mathrm{H}^{d-1}(V, \ZZ)$, and $a, b \in \mathrm{H}^{d-1}(\tilde{V}, \ZZ)$ that map to $x, y$ respectively. We will show 
\[
(\psi(x), \psi(y))=(\tilde{\psi}(a), \tilde{\psi}(b))=(-1)^n(x, y).
\]
By Proposition \ref{prop:polarized-HS} (2) it suffices to prove
\[
\langle a, b-\iota^*b\rangle=(x, y).
\]
Firstly it is independent of the choice of the classes $a$ and $b$. In fact, suppose any other $b'\in \mathrm{H}^{d-1}(\tilde{V}, \ZZ)$ maps to $y$. Then $b-b'$ is $\iota^*$-invariant, which implies $\langle a, b-\iota^*b\rangle=\langle a, b'-\iota^*b'\rangle$. Similarly it is independent of the choice of $a$ because 
\[
\langle a, b-\iota^*b\rangle=\langle a-\iota^*a, b\rangle.
\]
Hence we can assume $a=(x, 0), b=(y,0)$. It is direct to verify $\langle a, b-\iota^*b\rangle=(x, y)$.
\end{proof}

For the rest of the section we prove
\[
\mathrm{H}^{2n+d-3}(\XX, \ZZ)\cong \mathrm{H}^{2n+d+1}(\XX, \ZZ)^*=0.
\] 
Let $f: Q\rightarrow S$ be a quadric fibration contained in a projective bundle $\varphi: P\rightarrow S$. Denote by $i: Q\hookrightarrow P$ the inclusion over $S$. We define 
\[
(R^kf_*\ZZ)_v:=\text{coker}(i^*: R^k\varphi_*\ZZ\rightarrow R^kf_*\ZZ).
\]
On the level of cohomology we define
\[
\mathrm{H}^k(Q, \ZZ)_v:=\text{coker}(i^*:\mathrm{H}^k(P, \ZZ)\rightarrow \mathrm{H}^k(Q, \ZZ)).
\]
In the habilitation \cite{Nagel-coh-quad}, J.~Nagel introduced a modified Leray spectral sequence $E^{\bullet}(f)_v$ defined as the quotient of the the homomorphism 
\[
i^*:E^{\bullet}(\varphi)\rightarrow E^{\bullet}(f)
\]
of Leray spectral sequences with respect to $\varphi$ and $f$. To be precise, the term $E^{p,q}_r(f)_v$ is defined to be the cokernel of $i^*:E^{p,q}_r(\varphi)\rightarrow E^{p,q}_r(f)$. Using the mixed Hodge structure on Leray spectral sequence, Nagel showed that the data $\{E^{p,q}(f)_v\}$ form a spectral sequence that converges to $\mathrm{H}^{p+q}(Q, \ZZ)_v$. In particular, on the $E_2$-page there is the isomorphism
\[
E^{p,q}_2(f)_v\cong \mathrm{H}^p(S, (R^qf_*\ZZ)_v). 
\]

\begin{lemma}\label{primitive-quotient}
Let $f:\XX\rightarrow S$ be the quadric fibration of relative dimension $2n-1$ in our set-up, and $V$ be the discriminant divisor of $f$. Denote by $i:V\hookrightarrow S$ the closed immersion, and by $j: U:=S\setminus V\hookrightarrow S$ the open embedding of the open complement. The sheaf $(R^{2n}f_*\ZZ)_v$ fits into the following exact sequence
\begin{equation}\label{mid-higher-direct}
0\rightarrow j_{!}\underline{\ZZ/2\ZZ}\rightarrow (R^{2n}f_*\ZZ)_v\rightarrow i_*\mathbb{L}\rightarrow 0
\end{equation}
where $\mathbb{L}$ is a rank one local system on $V$. In addition, $(R^qf_*\ZZ)_v=0$ if $q<2n$, and $(R^qf_*\ZZ)_v=\underline{\ZZ/2\ZZ}$ for even $q>2n$.
\end{lemma}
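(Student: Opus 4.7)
The plan is to compute $(R^qf_*\ZZ)_v$ stalkwise via proper base change and then assemble the result as a constructible sheaf on $S$. By proper base change the stalk at $s\in S$ is the cokernel of the restriction $H^q(\PP^{2n},\ZZ)\to H^q(\XX_s,\ZZ)$, where the fiber is either a smooth odd-dimensional quadric (for $s\in U$) or a quadric cone with single vertex (for $s\in V$).

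First I would compute the stalks over $U$. Each fiber is a smooth quadric $Q\subset\PP^{2n}$ of complex dimension $2n-1$, so $H^*(Q,\ZZ)$ is torsion-free, concentrated in even degrees, with $H^{2k}(Q,\ZZ)\cong\ZZ$ for $0\le k\le 2n-1$. By Lefschetz the restriction $h^k\mapsto h^k|_Q$ is an isomorphism for $k<n$. For $n\le k\le 2n-1$ the generator in degree $2k$ is Poincaré dual to $h^{2n-1-k}|_Q$, and $\int_Q h^{2n-1}=\deg Q=2$ forces $h^k|_Q$ to equal twice the generator. Hence $(R^qf_*\ZZ)_v|_U$ equals $\underline{\ZZ/2\ZZ}$ in even degrees $q\in[2n,4n-2]$ and vanishes otherwise.

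Next I would compute the stalks over $V$. The fiber $Q_0$ is a cone over a smooth even-dimensional quadric $Q'$ of dimension $2n-2$, with single vertex $v_0$. I would resolve by blowing up $v_0$, producing a $\PP^1$-bundle $\widetilde{Q_0}\to Q'$ with $Q'$ embedded as the zero section, and then compute $H^*(Q_0)$ by Mayer--Vietoris for the covering $\{Q_0\setminus\{v_0\},\,\text{neighborhood of }v_0\}$. The intersection deformation retracts to the $S^1$-link over $Q'$, whose Gysin sequence is governed by the Euler class $-h|_{Q'}$ coming from the normal bundle of $Q'$ in $\widetilde{Q_0}$. The upshot is that $H^{2k}(Q_0)$ is free of rank one for $k\neq n$, while in degree $2n$ the two rulings $\Lambda_1,\Lambda_2\in H^{2n-2}(Q')$ collapse onto the single ruling through $v_0$ and contribute an additional free rank-one class. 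Comparing with the restriction from $H^*(\PP^{2n})$: in even degrees $q\in(2n,4n-2]$ the same Poincaré duality trick as in Step 1 gives cokernel $\ZZ/2\ZZ$; in degree $q=2n$ the cokernel is free of rank one; and the singular Lefschetz theorem makes the map an isomorphism in degrees $<2n-1$, giving vanishing cokernel there.

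Finally I would glue the stalks. For $q<2n$ both stalks vanish, hence $(R^qf_*\ZZ)_v=0$. For even $q\in(2n,4n-2]$ both stalks are $\ZZ/2\ZZ$ with identity specialization, yielding the constant sheaf $\underline{\ZZ/2\ZZ}$. In the critical degree $q=2n$ the sheaf restricts to $\underline{\ZZ/2\ZZ}$ over $U$ and has a free rank-one quotient along $V$; this quotient is a rank-one local system $\mathbb{L}$ on $V$ whose monodromy records the Picard--Lefschetz involution swapping $\Lambda_1,\Lambda_2$ in a one-parameter smoothing of $Q_0$. Stalkwise exactness then yields the short exact sequence $0\to j_!\underline{\ZZ/2\ZZ}\to (R^{2n}f_*\ZZ)_v\to i_*\mathbb{L}\to 0$. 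The main obstacle is the precise identification of $\mathbb{L}$ and its monodromy, which requires tracking how the two ruling classes on nearby smooth quadrics specialize at $Q_0$; a secondary technicality is the Mayer--Vietoris computation of $H^{2n-2}(Q_0)$ and $H^{2n}(Q_0)$, where the contributions of the hyperplane class and the ruling class must be carefully separated via the Euler class of the link bundle.
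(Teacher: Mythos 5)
Your strategy coincides with the paper's: compute the stalks of the primitive quotient via proper base change, separating the corank-zero fibers over $U$ from the corank-one fibers over $V$, and then assemble the sheaf from the canonical sequence $0\to j_!j^*\mathcal{F}\to\mathcal{F}\to i_*i^*\mathcal{F}\to 0$. Your stalk computations are correct: over $U$ the cokernel of $h^k\mapsto h^k|_Q$ is $\ZZ/2\ZZ$ precisely for even $q=2k\geq 2n$, and for the corank-one cone $Q_0$ one has $H^{2n}(Q_0)\cong\ZZ^2$ with $h^n$ mapping to the sum of the two generators (the two rulings of a smooth hyperplane section of $Q_0$), so the cokernel is free of rank one in degree $2n$ and is $\ZZ/2\ZZ$ in higher even degrees. (The paper simply quotes these stalks; you supply more detail, which is fine.)

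The genuine gap is the assertion that the quotient along $V$ is a rank-one \emph{local system}. A constructible sheaf on $V$ whose every stalk is $\ZZ$ need not be locally constant, and the mechanism that makes this automatic over $U$ is unavailable over $V$: over $U$ the family $f^\circ$ is smooth and proper, so $(R^{2n}f^\circ_*\ZZ)_v$ is the cokernel of a morphism of local systems and hence a local system (then constant, since $\ZZ/2\ZZ$ has no automorphisms), whereas the restricted family $g:\YY\ra V$ has singular fibers, so $R^{2n}g_*\ZZ$ is not a priori locally constant. The paper closes exactly this point by removing the section $\Sigma$ of vertices and using the triangle $Rg^{\circ}_!\ZZ\ra Rg_*\ZZ\ra Rh_*\ZZ$ with $h:\Sigma\ra V$ an isomorphism, which gives $R^{2n}g_*\ZZ\cong R^{2n}g^{\circ}_!\ZZ$, a local system because $g^{\circ}$ is smooth. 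Relatedly, your description of the monodromy of $\mathbb{L}$ as the Picard--Lefschetz involution of a one-parameter smoothing of $Q_0$ conflates two different monodromies: $\mathbb{L}$ lives on $V$, so its monodromy is the action of $\pi_1(V)$ permuting the two rulings of the cones within the family $g$ (acting by $\pm 1$ on the rank-one quotient), not the vanishing-cycle monodromy of a smoothing transverse to the discriminant. This does not affect the truth of the statement (which only asserts that $\mathbb{L}$ has rank one, and in the application $V$ is simply connected so $\mathbb{L}$ is constant anyway), but the local constancy itself requires the argument above and is missing from your outline.
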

\begin{proof}
The sheaf $(R^qf_*\ZZ)_v$ is constructible with respect to the degeneracy loci of the quadric fibration $f$. The local system on each stratum is fully determined by the corank of the quadrics parameterized by the stratum.

Suppose a quadric $F\subset \PP^{2n}$ has corank $s$. The stalk of the constructible sheaf $(R^qf_*\ZZ)_v$ at the point $[F]$ is the primitive quotient $\mathrm{H}^q(F, \ZZ)_v$:
\[
\mathrm{H}^q(F, \ZZ)_v\cong
\begin{cases}
\ZZ/2\ZZ,  & \text{even~} q>2n-1+s;\\
\ZZ, & \text{even~} q=2n-1+s;\\
0,   & \text{even~} q<2n-1+s \text{~or odd~} q.
\end{cases}
\]
For the quadric fibration $f$ the possible coranks of fibers are $0$ and $1$. By the characterization of stalks we immediately conclude $(R^qf_*\ZZ)_v=0$ if $q<2n$, and $(R^qf_*\ZZ)_v=\underline{\ZZ/2\ZZ}$ for even $q>2n$.

For $q=2n$ we consider the natural square diagrams 
\[
\begin{tikzcd}
\YY \ar[r, hook] \ar[d, "g"] & \XX \ar[d, "f"] & \XX_U \ar[l, hook'] \ar[d, "f^\circ"]\\
V\ar[r, hook, "i"] & S & U \ar[l, hook', "j"'], 
\end{tikzcd}
\]
and the canonical exact sequence
\[
0\rightarrow j_!j^*(R^{2n}f_*\ZZ)_v\rightarrow (R^{2n}f_*\ZZ)_v\rightarrow i_*i^*(R^{2n}f_*\ZZ)_v\rightarrow 0.
\]
By the proper base change theorem, it is easy to verify the quotient $(R^kf_*\ZZ)_v$ is invariant under any base change. Namely for any $S$-scheme $u: T\rightarrow S$ the base change map 
\[
u^*(R^kf_*\ZZ)_v\rightarrow (u^*R^kf_*\ZZ)_v
\]
is an isomorphism. Therefore we obtain the exact sequence
\[
0\rightarrow j_!(R^{2n}f^{\circ}_*\ZZ)_v\rightarrow (R^{2n}f_*\ZZ)_v \rightarrow i_*(R^{2n}g_*\ZZ)_v\rightarrow 0.
\]
Then it suffices to show $(R^{2n}f^{\circ}_*\ZZ)_v\cong \underline{\ZZ/2\ZZ}$ and $(R^{2n}g_*\ZZ)_v\cong \mathbb{L}$.

Note that $f^\circ$ is a smooth family of $(2n-1)$-dimensional quadrics. By passage to the stalk we can see $(R^{2n}f^{\circ}_*\ZZ)_v$ is a local system of the constant group $\ZZ/2\ZZ$. The family $g$ of singular quadrics is not a smooth morphism. Let $\Sigma\subset \YY$ be the locus of the singular vertex in each fiber. Consider the diagram
\[
\begin{tikzcd}
\YY\setminus \Sigma \ar[r, hook, "k"] \ar[rd, "g^\circ"'] & \YY \ar[d, "g"]&\Sigma \ar[l, hook', "e"'] \ar[ld, "h"] \\
& V &.
\end{tikzcd}
\]
Applying the derived functor $Rg_*$ to the canonical exact sequence 
\[
0\rightarrow k_!\ZZ\rightarrow \ZZ\rightarrow e_*\ZZ\rightarrow 0
\]
yields a triangle 
\[
Rg^{\circ}_!\ZZ\rightarrow Rg_*\ZZ \rightarrow Rh_*\ZZ
\] 
in the derived category $D^b(V)$. Note that $h$ is an isomorphism. It follows that $R^qg^{\circ}_!\ZZ\cong R^qg_*\ZZ$ for all $q\geq 1$. In particular, $R^qg^{\circ}_!\ZZ$ is a local system on $V$ since $g^{\circ}$ is smooth. By passage to the stalk, we conclude that $(R^{2n}g_*\ZZ)_v$ is a local system of rank one.
\end{proof}

\begin{corollary}\label{vanish-coh-quad-bdl}
The cohomology $\mathrm{H}^{2n+d-3}(\XX, \ZZ)$ is zero.
\end{corollary}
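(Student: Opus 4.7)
The plan is to exploit Nagel's modified Leray spectral sequence
\[
E_2^{p,q}(f)_v = H^p(S, (R^q f_*\ZZ)_v) \Rightarrow H^{p+q}(\XX, \ZZ)_v
\]
introduced just above, verify its abutment in total degree $2n+d-3$ vanishes, and then conclude via the defining cokernel exact sequence
\[
H^{2n+d-3}(P,\ZZ) \to H^{2n+d-3}(\XX,\ZZ) \to H^{2n+d-3}(\XX,\ZZ)_v \to 0,
\]
where $P \to S$ is the ambient $\PP^{2n}$-bundle.

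First I would enumerate the $E_2$-terms with $p+q = 2n+d-3$. By Lemma \ref{primitive-quotient} together with the stalk formula for $H^q(Q,\ZZ)_v$ recalled in its proof, the sheaf $(R^q f_*\ZZ)_v$ vanishes whenever $q < 2n$ or $q$ is odd. For even $q > 2n$ it is the constant sheaf $\underline{\ZZ/2\ZZ}$ on $S$, so the contribution is $H^{2n+d-3-q}(S,\ZZ/2\ZZ)$; since $2n+d-3-q$ is odd and $H^*(S,\ZZ)$ is torsion-free and vanishes in odd degrees, the universal coefficient theorem forces these terms to be zero. Thus only $q = 2n$ can contribute, via $E_2^{d-3,2n} = H^{d-3}(S, (R^{2n}f_*\ZZ)_v)$.

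To dispatch this remaining term I would plug the short exact sequence
\[
0 \to j_!\underline{\ZZ/2\ZZ} \to (R^{2n}f_*\ZZ)_v \to i_*\mathbb{L} \to 0
\]
from Lemma \ref{primitive-quotient} into the long exact sequence of sheaf cohomology on $S$, reducing the problem to the vanishing of $H^{d-3}(V,\mathbb{L})$ and $H^{d-3}(S, j_!\underline{\ZZ/2\ZZ})$. Since $\dim V = d-1 \ge 3$ in our applications, the Lefschetz hyperplane theorem makes $V$ simply connected, hence $\mathbb{L}$ is trivial and $H^{d-3}(V,\mathbb{L}) \cong H^{d-3}(V,\ZZ) \cong H^{d-3}(S,\ZZ) = 0$, the last equality because $d-3$ is odd. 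For $H^{d-3}(S, j_!\underline{\ZZ/2\ZZ})$ I would use the canonical exact sequence
\[
0 \to j_!\underline{\ZZ/2\ZZ} \to \underline{\ZZ/2\ZZ}_S \to i_*\underline{\ZZ/2\ZZ}_V \to 0,
\]
in whose associated long exact sequence the map $H^{d-4}(S,\ZZ/2\ZZ) \to H^{d-4}(V,\ZZ/2\ZZ)$ is an isomorphism by Lefschetz (as $d-4 < \dim V$) and $H^{d-3}(S,\ZZ/2\ZZ) = 0$ by the same odd-degree argument; this forces $H^{d-3}(S, j_!\underline{\ZZ/2\ZZ}) = 0$ and therefore $H^{2n+d-3}(\XX,\ZZ)_v = 0$.

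The last step is routine: the projective bundle formula writes $H^{2n+d-3}(P,\ZZ) = \bigoplus_{i=0}^{2n} H^{2n+d-3-2i}(S,\ZZ)$, and every summand sits in odd degree and so vanishes by hypothesis. Combined with the vanishing of the primitive quotient, this yields $H^{2n+d-3}(\XX,\ZZ) = 0$. The genuine obstacle lies in the $q = 2n$ term, where the two building blocks $j_!\underline{\ZZ/2\ZZ}$ and $i_*\mathbb{L}$ of $(R^{2n}f_*\ZZ)_v$ must be analyzed separately and handled through Lefschetz combined with the assumed odd-degree vanishing of $H^*(S,\ZZ)$.
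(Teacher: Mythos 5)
Your proof is correct and follows essentially the same route as the paper: Nagel's spectral sequence reduces everything to the single term $E_2^{d-3,2n}=H^{d-3}(S,(R^{2n}f_*\ZZ)_v)$, which is then killed by running the long exact sequence attached to the short exact sequence of Lemma \ref{primitive-quotient}. The only divergence is your treatment of $H^{d-3}(S,j_!\underline{\ZZ/2\ZZ})=H^{d-3}_c(U,\ZZ/2\ZZ)$: the paper uses Poincar\'e duality together with the bound on the homotopy dimension of the affine variety $U$, whereas you run the excision sequence $0\to j_!\underline{\ZZ/2\ZZ}\to \underline{\ZZ/2\ZZ}_S\to i_*\underline{\ZZ/2\ZZ}_V\to 0$ and invoke the Lefschetz hyperplane theorem; both are valid.
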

\begin{proof}
In our set-up, the base space $S$ is even dimensional. Then the odd degree cohomology of the projective bundle $P$ vanishes. Hence 
\[
\mathrm{H}^{2n+d-3}(\XX)=\mathrm{H}^{2n+d-3}(\XX)_v.
\] 
We use Nagel's spectral sequence
\[
E^{p, q}_2(f)_v:=\mathrm{H}^p(S, (R^qf_*\ZZ)_v)\Rightarrow \mathrm{H}^{2n+d-3}(\XX)_v.
\]
By the description of $(R^qf_*\ZZ)_v$ in the Lemma \ref{primitive-quotient}, the only non-trivial $E_2$-term that degenerates to $\mathrm{H}^{2n+d-3}(\XX)_v$ is 
\[
E^{d-3, 2n}_2(f)_v=\mathrm{H}^{d-3}(S, (R^{2n}f_*\ZZ)_v).
\]
Applying the derived functor $R\Gamma(S,-)$ to the exact sequence \eqref{mid-higher-direct} we obtain the long exact sequence
\[
\cdots \rightarrow \mathrm{H}^{d-3}_c(U, \ZZ/2\ZZ)\rightarrow \mathrm{H}^{d-3}(S, (R^{2n}f_*\ZZ)_v)\rightarrow \mathrm{H}^{d-3}(V, \mathbb{L})\rightarrow \cdots.
\]
The Poincar\'e duality asserts $\mathrm{H}^{d-3}_c(U, \ZZ/2\ZZ)\cong H_{d+3}(U, \ZZ/2\ZZ)$. Recall that $U$ is a smooth affine variety of dimension $d$. Then $U$ is homotopic to a CW-complex of real dimension $\leq d$ by the Morse theory. Hence $H_{i}(U, \ZZ/2\ZZ)=0$ for $i>d$. 

The category of local systems on $V$ is equivalent to the category of monodromy representations of $\pi_1(V)$. By our hypothesis $V$ is simply connected. Therefore the rank one local system $\mathbb{L}$ is isomorphic to the constant group $\ZZ$. Lemma \ref{cohomology-condition} asserts $\mathrm{H}^{d-3}(V, \ZZ)=0$. As a result, we have $\mathrm{H}^{d-3}(S, (R^{2n}f_*\ZZ)_v)=0$.
\end{proof}

\section{Extension of period mappings}
\label{sec:ext-period-map}

Let \(S\subset \PP^{m+1}\) be a Severi variety. Denote by $\FF$ 
the moduli space of smooth cubic hypersurfaces in $\PP^{m+1}$,
and by $\overline{\FF}$ the GIT quotient of semi-stable cubic hypersurfaces 
by the reductive group $SL(m+2, \CC)$. Let \(\overline{\FF}^\mathrm{kir}\) 
be Kirwan's blow-up of $\overline{\FF}$ at the secant cubic 
$[\Sec(S)]\in \overline{\FF}$, and let \(\MM\) be the exceptional divisor 
in \(\overline{\FF}^\mathrm{kir}\). Let \(\Gamma\backslash \mathcal{D}\) 
denote the period domain corresponding to the Hodge structures of 
smooth cubic hypersurfaces in \(\mathbb{P}^{m+1}\).
The classical period map 
\[
\mathcal{P}\colon \FF\to \Gamma\backslash \mathcal{D}
\]
induces a rational map on the blow-up \(\overline{\FF}^\mathrm{kir}\).
Our goal is to show that this rational map is generically defined
on the exceptional divisor $\MM$. 
More precisely, we prove the following theorem.
\begin{theorem}
\label{thm:glob-ext-thm}
Keep notations as above. 
\begin{enumerate}
    \item The exceptional divisor \(\MM\) can be identified with
    the GIT quotient of divisors in the line bundle \(\mathcal{O}_S(3)\) 
    on \(S\).
    \item Consider Usui's compactification \(\overline{\Gamma\backslash \mathcal{D}}\)
    of the period domain \(\Gamma\backslash \mathcal{D}\). Then the rational period map 
    \[
    \mathcal{P}: \overline{\FF}^\mathrm{kir}\dashrightarrow \overline{\Gamma\backslash \mathcal{D}}
    \]
    extends to a holomorphic map \(\overline{\mathcal{P}}\) defined on 
    the generic points of $\MM$. Moreover, the map $\overline{\mathcal{P}}$ 
    restricted to $\MM$ is the classical period map for smooth divisors in \(|\mathcal{O}_S(3)|\).
\end{enumerate}
\end{theorem} 

\subsection{Kirwan's blowing up and the exceptional divisor} 
Let $P$ denote the projective space $\PP(|\OO_{\PP^{m+1}}(3)|)$ 
and let $G:=SL(m+2, \CC)$. Let $x\in P$ represent the secant cubic $\Sec(S)$.
We first note the following.
\begin{lemma}
$\Sec(S)$ is a semistable object in $P$.
\end{lemma}
\begin{proof}
The proof is an application of the Hilbert-Mumford criterion, 
as used by Laza to show the secant cubic fourfold is semistable; see \cite[Lem. 4.3]{Laza-moduli-cubic-09}. The argument in our cases follows a similar argument.
\end{proof}

Consider the GIT quotient $\overline{\FF}:=P^{ss}/\!\!/G$.
Denote by \(G_x\subset G\) the stabilizer subgroup of $x$. By 
Luna's \'etale slice theorem~\cite[p. 198]{GIT-Mumford},
there exists an \'etale local neighborhood around \(x\)
isomorphic to the quotient space $\mathcal{N}_x/\!\!/G_x$,
where $\mathcal{N}_x$ denotes the normal bundle of the closed orbit 
$G\cdot x$ in $P$ at the point $x$. Consequently, by the \'etale local structure, 
the exceptional divisor $\mathcal{M}$ is isomorphic to the GIT quotient 
$\PP(\mathcal{N}_x)^{ss}/\!\!/G_x$.

For the case of the secant cubic fourfold, the normal bundle 
$\mathcal{N}_x$ is identified with the space of plane sextic curves, 
and the stabilizer group $G_x$ is isomorphic to $SL(3, \CC)$, which 
acts naturally on the Veronese surface $S$; see \cite[\S 4.1.1]{Laza-moduli-cubic-09}. 
Using the representations of (semi)simple algebraic groups that 
characterize Severi varieties (see Section~\ref{sec:Severi-rep-alg-grp}), 
we similarly describe the normal bundle $\mathcal{N}_x$ and 
the stabilizer $G_x$ for the higher-dimensional cases. 

\begin{lemma}
\label{stab-ss-grp}
Let $H\rightarrow \mathrm{Aut}(W)$ with $W\cong \CC^{m+2}$ be 
the representation described in~\ref{sec:Severi-rep-alg-grp} 
corresponding to the Severi variety $S\subset \PP^{m+1}$. 
Then the stabilizer subgroup $G_x\subset SL(m+2, \CC)$ is isomorphic to 
the algebraic group $H$.
\end{lemma}
\begin{proof}
Recall that $\Sec(S)$ is defined as the determinant form of 
the space $W$ of matrices. The stabilizer $G_x$ therefore consists of 
those automorphisms in $SL(m+2, \CC)$ that preserves the determinants of 
all matrices in $W$. Then Landsberg proved that such $G_x$ 
is isomorphic to the algebraic group $H$ (see \cite[(3.4)]{Landsberg96}). 
\end{proof}

\begin{lemma}
Let $\mathcal{O}_S(1)$ be the hyperplane line bundle on 
the Severi variety $S\subset \PP^{m+1}$. Then the normal bundle 
$\mathcal{N}_x$ is isomorphic to the space $\Gamma(S,\mathcal{O}_S(3))$.
\end{lemma}
\begin{proof}
There is a natural restriction map 
\[
\Sym^3W=\Gamma(\PP^{m+1}, \OO_{\PP^N}(3))\rightarrow \Gamma(S, \OO_S(3)).
\]
Let $T_x(G\cdot x)$ be the tangent space of the orbit $G\cdot x\subset P$ at the point $x$. Our assertion will follow if the sequence 
\[
0\rightarrow T_x(G\cdot x)\rightarrow T_x P=\Sym^3W/\CC \langle f\rangle \rightarrow \Gamma(\OO_S(3))
\] 
is exact at the middle and the dimension condition 
\begin{equation}\label{dim-cond}
(\dim \Sym^3 W-1)-\dim T_x(G\cdot x)=\dim \Gamma(\OO_S(3))
\end{equation} 
holds.

Let $f$ be the equation of the secant cubic hypersurface. 
The tangent space $T_x(G\cdot x)$ is the subspace of $\Sym^3 W/\CC \langle f\rangle$ generated by the Jacobian ideal 
\[
J_f=\langle \frac{\partial f}{\partial x_0}, \ldots,  \frac{\partial f}{\partial x_N}\rangle
\]
of the equation $f$. It is known that the Severi variety $S$ is cut out by the differentials $\{\frac{\partial f}{\partial x_i}\}_{0\leq i\leq N}$. Therefore a cubic polynomial $q$ restricts to zero on $S$ if and only if $q$ lies in the Jacobian ideal $J_f$, which implies exactness. Since $(G\cdot x)\cong G/G_x$, we have
\[
\dim T_x(G\cdot x)=\dim G-\dim G_x.
\]
By Lemma~\ref{stab-ss-grp} the algebraic group $H$ is isomorphic to $G_x$. 
Hence it suffices to verify \eqref{dim-cond} case by case.
\begin{enumerate}
    \item $d=4, m=7$. Then $G=SL(9,\CC)$ and $G_x=SL(3,\CC)\times SL(3,\CC)$. We have 
    \[
    \dim \Sym^3W=\binom{11}{3}=165, ~\dim G-\dim G_x=64.
    \] 
    It is easy to compute $\dim \Gamma(\OO_S(3))=\dim \Gamma(\OO_{\PP^2\times\PP^2}(3,3))=100$.
    \item $d=8, m=13$. Then $G=SL(15,\CC)$, $G_x=SL(6,\CC)$. We have 
    \[
    \dim \Sym^3W=\binom{17}{3}=680, ~\dim G-\dim G_x=189.
    \] 
    By Borel-Weil-Bott's theory, the vector space $\Gamma(\OO_S(3))=\Gamma(\OO_{\mathrm{Gr}(2, 6)}(3))$ is an irreducible $\mathfrak{sl}_6$-module that corresponds to a fundamental weight of $\Sym^3 W$. Using Weyl's character formula we obtain $\dim \Gamma(\OO_{\mathrm{Gr}(2, 6)}(3))=490$.
    \item $d=16, m=25$. Then $G=SL(27,\CC)$ and $G_x=E_6$. We have 
    \[
    \dim \Sym^3W=\binom{29}{3}=3654, ~\dim G-\dim G_x=650.
    \] 
    Again, the space $\Gamma(\OO_S(3))=\Gamma(\OO_{\mathbb{OP}^2}(3))$ is an irreducible $E_6$-module that corresponds to a highest weight of $\Sym^3 W$. Weyl's character formula implies $\dim \Gamma(\OO_{\mathbb{OP}^2}(3))=3003$, see \cite{Manivel-Cayley-plane-11}.
\end{enumerate} 
So the dimension condition \eqref{dim-cond} is true for each case.
\end{proof}

As a consequence, we have
\begin{corollary}
\label{excep-div-cubic-section} 
Let \(S\subset \PP^{m+1}\) be a Severi variety. 
The exceptional divisor $\MM$ in Kirwan's blow-up $\overline{\FF}^{\mathrm{Kir}}$ 
at the point $[\Sec(S)]\in \overline{\FF}$ is isomorphic to the GIT quotient 
$\PP(|\OO_S(3)|)^{ss}/\!\!/H$, where $H$ is the associated algebraic group
in~\ref{sec:Severi-rep-alg-grp}. 
\end{corollary} 
For $S=\PP^2\times \PP^2$, $\MM$ parametrizes Calabi-Yau threefolds in Example \ref{HS-CY-type}. 
For $S=\mathrm{Gr}(2,6)$ or $\mathbb{O}\PP^2$, a divisor of $\OO_S(3)$ is a Fano variety.

\subsection{Partial compactification and extension theorem} 
The second statement in Theorem~\ref{thm:glob-ext-thm} can be reduced to a local problem.
Since \(\MM\) is an exceptional divisor, a generic point in \(\MM\) 
corresponds to an arc centered at \([\Sec(S)]\in \FF\) whose direction 
is given by a generic smooth cubic \(m\)-fold. Such an arc \(\Delta\) yields 
a one-parameter degeneration, i.e., a period map 
\[
\phi \colon \Delta^*\to \Gamma\backslash \mathcal{D}.
\]
We need a suitable partial compactification of 
\(\Gamma\backslash \mathcal{D}\) such that \(\phi\) 
extends across the puncture and the limit period point
is characterized by the limit mixed Hodge structure.

The classical Baily-Borel compactification does not apply
in our setting because \(\Gamma\backslash \mathcal{D}\) is not 
a bounded symmetric domain when the weight of the Hodge structures
parametrized by \(\mathcal{D}\) is larger than two. 
We find that Usui's partial compactification~\cite{Usui-compactify-95}
is the appropriate candidate for extending 
the local period map. The key point is that the setting~\eqref{mild-nil-cond} 
used to construct this compactification is satisfied by 
the limit mixed Hodge structures in our cases.

Let $V$ be a $\QQ$-vector space, $k$ an odd integer, 
$S$ a non-degenerate anti-symmetric form on $V$,  
$\{h^{p,q}\}$ a collection of non-negative integers with $p+q=k$. 
Let $\mathcal{D}$ be the classifying space of polarized Hodge structures of 
type $(V, h^{p,q}, S, k)$. 
Let \(W_{k-1}\) be an \(S\)-isotropic subspace of \(V\) defined over 
\(\mathbb{Q}\). Assume
\begin{equation}
\label{mild-nil-cond}
 \dim W_{k-1}=1.
\end{equation}
Let \(W_k\) be the orthogonal complement \(W_{k-1}^{\perp}\) with respect to \(S\).
We obatin a filtration
\begin{equation}
    \label{index-one-w-fil}
0\subset W_{k-1}\subset W_k\subset W_{k+1}=V.
\end{equation}
A set \(p=\{p^{a, b}_{\lambda}\}_{\lambda\in\{k-1, k,k+1\}}\) 
of non-negative integers is said belonging to \(\{h^{p,q}\}\) if 
it satisfies the following conditions:
\begin{enumerate}
\item The indices \(a, b\) are non-negative integers satisfying \(a+b=\lambda\);
\item \(p^{a, b}_{\lambda}=p^{b, a}_{\lambda}\) for all \(a, b\);
\item $p^{a,b}_k=h^{a,b}$ if $a\neq \frac{k-1}{2}, \frac{k+1}{2}$;
\item $p^{a,b}_k=h^{a,b}-1$ if $a=\frac{k-1}{2}$ or $\frac{k+1}{2}$;
\item $p^{a,b}_{k-1}=1$ if $a=\frac{k-1}{2}$ and $p^{a,b}_{k+1}=1$ if $a=\frac{k+1}{2}$.
\end{enumerate}

\begin{definition}
\label{def:rat-bound-comp}
Given a weight filtration \(W\) in~\eqref{index-one-w-fil} 
satisfying~\eqref{mild-nil-cond}, a set of integers
\(\{p^{a, b}_{\lambda}\}\) as above, and a nilpotent operator 
\(N\in \mathrm{Hom}_{\mathbb{Q}}(V,V)\) of index two such that 
\(\mathrm{Im}(N)=W_{k-1}\) and \(N\) is an infinitesimal isometry with respect to \(S\),
one defines the \emph{rational boundary component} \(B(W, p, N)\) 
to be the classifying space of filtrations \(\{F^{\bullet}\subset V\}\) 
such that \(F^{\bullet}\) induces a polarized Hodge structure of type \(\{p^{a,b}_{k}\}\)
(resp. \(\{p^{a,b}_{k-1}\}\)) on \(\mathrm{Gr}^W_{k} V\) (resp. \(W_{k-1}\)) 
with the polarization form \(\tilde{S}\) (resp. \(S(N^{-1}\cdot, \cdot)\)).   
\end{definition}

It is easy to note that the nilpotent operator \(N\) is unique 
up to a positive constant because \(\dim W_{k-1}=1\). In fact, 
\((W, F)\) forms a polarized mixed Hodge structure on \(W_k\). 

Consider the disjoint union  of rational boundary bundles 
\[
   \mathcal{D}^{**} = \mathcal{D}\sqcup \underset{W, p}{\coprod} F(W, p), F(W, p)=\{ (\mathrm{gr}^W_{\lambda} F)_{\lambda}\mid F\in \underset{W, p}{\coprod} B(W, p, N)\}.
\]
Usui~\cite{Usui-compactify-95} described the Satake topology on $\mathcal{D}^{**}$. 
The partial compactification $\overline{\Gamma \backslash \mathcal{D}}$ is the 
arithmetic quotient $\Gamma \backslash \mathcal{D}^{**}$ with induced complex structure; 
see~\cite[Cor. 4.19]{Usui-compactify-95}. On this compactification, 
we have the following extension property of local period maps
\begin{theorem}
\label{thm:compact-ext}
Let $\phi: \Delta^*\rightarrow \Gamma\backslash \mathcal{D}$ be a period map 
such that the nilpotent monodromy $N$ has index $2$ and 
the induced monodromy weight filtration \(W(N)\) satisfies~\eqref{mild-nil-cond}. 
Then 
\begin{enumerate}
    \item $\phi$ extends to a holomorphic map 
    $\overline{\phi}: \Delta\rightarrow \overline{\Gamma \backslash \mathcal{D}}$;
    \item the limit period point $\overline{\phi}(0)$ in the boundary component 
    of \(\overline{\Gamma \backslash \mathcal{D}}\) represents the polarized Hodge structure 
    on \(\mathrm{Gr}^W_k\) induced by the associated limit mixed Hodge structure 
    of \(\phi\).
\end{enumerate}
\end{theorem}
\begin{proof}
    For the first assertion, we refer to the proof 
    in~\cite[Thm. 5.1]{Usui-compactify-95}.

    It is clear that the monodromy weight filtration \(W(N)\) yields
    a rational boundary component \(B(W(N), p, N)\), where the integers
    \(p^{a,b}_{\lambda}\) corresponds to the primitive Hodge numbers of 
    the limit mixed Hodge structure induced by \(\phi\). Note that 
    the assumption of the theorem implies that the primitive part 
    \(P_{\lambda}\) (cf. Proposition~\ref{prop:polarized-HS})
    equals the whole graded piece \(\mathrm{Gr}^W_{\lambda}\)
    for \(\lambda=k-1, k\).

    It follows from the description in~\cite[Thm. 5.1]{Usui-compactify-95}
    that the limit period point \(\overline{\phi}(0)\) is characterized by 
    the nilpotent orbit 
    \[\mathrm{exp}(zN)\cdot F_{\infty}, z\in \mathbb{C}
    \]
    where \(F_{\infty}\) is the limit Hodge filtration~\ref{eqs:lim-hodge-flit}.
    Then \(F_{\infty}\) is the filtration in the rational boundary component 
    \(B(W(N), p, N)\) representing the limit value of \(\overline{\phi}\).
    Finally, note that the polarized Hodge structure on \(W_{k-1}\) is trivial. 
    Hence the rational boundary bundle \(F(W, p)\) coincides with 
    the classifying space of polarized Hodge structures on \(\mathrm{Gr}^W_{m}\),
    and the limit period point \(\overline{\phi}(0)\) is the polarized Hodge structure
    given by the limit mixed Hodge structure \((W(N), F_{\infty})\). The second assertion thus follows.
\end{proof}

\begin{proof}[Proof of Theorem~\ref{thm:glob-ext-thm}]
The first assertion is given in Corollary~\ref{excep-div-cubic-section}.
    
    From the discussions above, Theorem~\ref{lim-mix-HS}
    and Theorem~\ref{thm:glob-ext-thm} together imply that the rational period map
    extends over a generic point in the exceptional divisor \(\MM\). 
    Moreover, the polarized Hodge structure on \(\mathrm{Gr}^W_m H^m_{\lim}\), 
    as the limit period point \(\overline{\phi}(0)\), 
    is precisely the polarized Hodge structure of \(\mathrm{H}^{d-1}(V)\).
    Note that \(V\) is the smooth divisor in \(|\mathcal{O}_S(3)|\) 
    parametrized by \(\MM\). Therefore the extension map restricted to \(\MM\) is exactly
    the classical period map.
\end{proof}

\section*{Acknowledgements}
We would like to thank Brendan Hassett and Johannes Nagel explaining their works. We also thank Baohua Fu, Eduard Looijenga, Feng Shao and Mingmin Shen for answering some questions.

\end{document}